\newtheorem{df}{Definition}[section]
\newtheorem{definition}[df]{Definition}
\newtheorem{theorem}[df]{Theorem}
\newtheorem{proposition}[df]{Proposition}
\newtheorem{lemma}[df]{Lemma}
\newtheorem{conjecture}[df]{Conjecture}
\newtheorem{procedure}[df]{Procedure}
\newtheorem{remark}[df]{Remark}
\newcounter{listacnt}\renewcommand{\thelistacnt}{\alph{listacnt}}
\newenvironment{theoremlist}{\begin{list}{\textnormal{(\thelistacnt)}}%
{\settowidth{\labelwidth}{(b)}%
\setlength{\topsep}{1.5pt plus 0.5pt minus 0.5pt}%
\setlength{\itemsep}{1pt plus 0.2pt minus 0.2pt}%
\setlength{\parsep}{0.1pt plus 0.1pt minus 0.1pt}%
\usecounter{listacnt}}}{\end{list}}
\newcommand{\ba}{\bar{a}}
\newcommand{\da}{\dot{a}}
\newcommand{\db}{\dot{b}}
\newcommand{\bb}{\bar{b}}
\newcommand{\rd}{{\rm d}}
\newcommand{\trd}{\tilde{\rm d}}
\newcommand{\bx}{\bar{x}}
\newcommand{\dx}{\dot{x}}
\newcommand{\tx}{\tilde{x}}
\newcommand{\bL}{\bar{L}}
\newcommand{\dL}{\dot{L}}
\newcommand{\C}{\mathcal{C}}
\newcommand{\cI}{\mathcal{I}}
\newcommand{\dal}{\Delta_\alpha}
\newcommand{\lec}{\le_{cw}}
\newcommand{\eps}{\varepsilon}
\newcommand{\gc}{>_{cw}}
\newcommand{\bydef}{\,\stackrel{\mbox{\tiny\textnormal{\raisebox{0ex}[0ex][0ex]{def}}}}{=}\,} 
\newcommand{\dagA}{A^{\dagger}}
\newcommand{\zft}{\zf^T}
\newcommand{\zf}{{0_{_F}}}
\newcommand{\cF}{\mathcal{F}}
\title{Recent advances about the uniqueness of the slowly oscillating periodic solutions of Wright's equation}
\date{}
\author{Jean-Philippe Lessard\thanks{Department of Mathematics, Rutgers University, 110 Frelinghusen Rd, Piscataway, NJ  08854-8019, USA ({\tt lessard@math.rutgers.edu}). }
\thanks{Department of Mathematics, VU University Amsterdam, De Boelelaan 1081, The Netherlands ({\tt jlessard@few.vu.nl})} }
\begin{document}

\maketitle

\begin{abstract}
An old conjecture in delay equations states that Wright's equation \[ y'(t)= - \alpha y(t-1) [ 1+y(t)],~~ \alpha \in \mathbb{R} \]  has a unique slowly oscillating periodic solution (SOPS) for every parameter value $\alpha>\pi/2$. We reformulate this conjecture and we use a method called validated continuation to rigorously compute a global continuous branch of SOPS of Wright's equation. Using this method, we show that a part of this branch does not have any fold point, partially answering the new reformulated conjecture. 
\end{abstract}
 
\section{Introduction} \label{sec:introduction}

In 1955,  Edward M. Wright considered the equation
\begin{equation} \label{eq:wright}
y'(t)= - \alpha y(t-1) [ 1+y(t)], ~\alpha>0,
\end{equation}
because of its role in probability methods applied to the theory of distribution of prime numbers, and he proved the existence of bounded non constant solutions which do not tend to zero, for every $\alpha>\pi/2$ \cite{wright}. Throughout this paper, we refer to equation (\ref{eq:wright}) as Wright's equation. Since the work presented in \cite{wright}, equation (\ref{eq:wright}) has been studied by many mathematicians (e.g. see \cite{C-MP1,Jones2, Jones1, kakutani-markus, kaplan-yorke1, kaplan-yorke2,nussbaum4, nussbaum2, nussbaum5}). In 1962, G.S. Jones proved the existence of periodic solutions of (\ref{eq:wright}) for $\alpha>\pi/2$ \cite{Jones2}. Then in \cite{Jones1}, he studied their quantitative properties and he made the following remark.
\begin{quote} {\em The most important observable phenomenon resulting from these numerical experiments is the apparently rapid convergence of solutions of (\ref{eq:wright}) to a {\bf single} cycle fixed periodic form which seems to be independent of the initial specification on $[-1,0]$ to within translations.}
\end{quote}
The cycle fixed periodic form he refers to is a slowly oscillating periodic solution.
\begin{definition} {\em A slowly oscillating periodic solution ({\em SOPS}) of (\ref{eq:wright}) is a periodic solution $y(t)$ with the following property: there exist $q>1$ and $p>q+1$ such that, up to a time translation, $y(t)>0$ on $(0,q)$, $y(t)<0$ on $(q,p)$, and $y(t+p)=y(t)$ for all $t$ so that $p$ is the minimal period of $y(t)$. }
\end{definition}

After Jones made the above remark, the question of the uniqueness of SOPS in (\ref{eq:wright}) became popular and is still under investigation after almost fifty years.
\begin{conjecture} \label{conj:uniqueSOPS}
For every $\alpha > \frac{\pi}{2}$, (\ref{eq:wright}) has a unique SOPS.
\end{conjecture}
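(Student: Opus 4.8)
The plan is to recast the existence of a SOPS as a zero-finding problem for a nonlinear operator and then to combine three ingredients: (i) a global bifurcation picture showing that every SOPS lies on a single connected branch parametrized by $\alpha$; (ii) analytic a priori bounds confining that branch, and any hypothetical extra SOPS, to an explicit region of the relevant function space; and (iii) a computer-assisted argument (validated continuation, as used in this paper) on a compact parameter interval certifying that inside this region the branch is a single arc along which $\alpha$ is strictly monotone, patched to classical local Hopf analysis at $\alpha=\pi/2^+$ and to the known large-$\alpha$ uniqueness results. First I would reformulate: normalizing time by the unknown period $p$, a SOPS becomes a $2\pi$-periodic solution of $\frac{2\pi}{p}u'(s)=-\alpha\, u(s-2\pi/p)\,(1+u(s))$, and expanding $u(s)=\sum_k c_k e^{iks}$ turns this into an infinite algebraic system $F(c,p,\alpha)=0$ for the Fourier sequence $c=(c_k)$, with a phase condition (say $\mathrm{Im}\,c_1=0$) imposed to quotient out the time-translation freedom in the Definition. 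The SOPS for $\alpha>\pi/2$ then correspond to those solutions of $F=0$ that in addition satisfy the sign structure $u>0$ on $(0,q)$, $u<0$ on $(q,p)$ with $q>1$, $p>q+1$, and near a regular zero the solution set is a smooth curve by the implicit function theorem.

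Next I would assemble the global structure. At $\alpha=\pi/2$ the characteristic equation of the linearization of (\ref{eq:wright}) about $y\equiv 0$ has the simple conjugate pair $\pm i\pi/2$ crossing the imaginary axis transversally, so a supercritical Hopf bifurcation produces a branch $\mathcal{B}$ of SOPS; by global Hopf bifurcation theory this branch is unbounded, and since the remaining roots that later cross the axis generate rapidly (not slowly) oscillating solutions, $\mathcal{B}$ persists for all $\alpha>\pi/2$ with its amplitude increasing. Conjecture \ref{conj:uniqueSOPS} is then equivalent to the conjunction of two statements: $\mathcal{B}$ contains every SOPS, and $\alpha$ restricted to $\mathcal{B}$ is injective. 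The second statement holds provided $\mathcal{B}$ has no fold point, i.e. the parameter $\alpha$ is never stationary along the arc-length parametrization — equivalently the full derivative $DF$ in the variables $(c,p)$ is nonsingular along the branch — and this is exactly the kind of assertion that validated continuation can certify on a compact interval $[\pi/2+\delta,\,A]$.

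The crux is step (iii)'s input: one needs analytic a priori bounds — lower and upper bounds on the period $p$ of any SOPS as a function of $\alpha$, bounds on $\max u$ and $-\min u$, and geometric-decay estimates on $|c_k|$ exploiting the analyticity of periodic solutions of the delay equation — that together trap the set of all SOPS at a given $\alpha$ inside an explicit ball in the Fourier sequence space. A Newton--Kantorovich / radii-polynomial estimate carried out in interval arithmetic then shows that this ball lies inside the validated tube around $\mathcal{B}$, so $\mathcal{B}$ is the unique SOPS there. Near $\alpha=\pi/2$ the branch is covered by the standard Hopf normal form (local uniqueness of the emanating orbit), and for $\alpha\ge A$ one invokes the existing large-parameter uniqueness theorem; taking $A$ beyond that threshold closes the interval and hence all $\alpha>\pi/2$.

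The main obstacle I anticipate is making the a priori region small enough, uniformly in $\alpha$ on $[\pi/2+\delta,A]$, to fit inside what a rigorous computation can enclose, combined with the fact that as $\alpha$ grows the SOPS develops sharp transition layers (the slow--fast regime), so its Fourier series converges slowly and validated continuation needs either a very high truncation dimension or a layer-adapted rescaling; pushing $A$ all the way up to the large-$\alpha$ uniqueness threshold while keeping the enclosures tight is where most of the work — and the real risk of the program stalling — lies. A secondary obstacle is establishing rigorously that $\mathcal{B}$ captures \emph{all} SOPS rather than merely those near the Hopf branch, which requires the connectedness / no-isolated-SOPS input from the global bifurcation machinery.
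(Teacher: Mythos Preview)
The statement you are attempting to prove is labeled a \emph{Conjecture} in the paper, and the paper does not prove it; indeed the author says explicitly that ``new mathematical ideas are required to solve Conjecture~\ref{conj:uniqueSOPS}.'' What the paper actually establishes is the much weaker Theorem~\ref{thm:SOPS}: the Hopf branch $\cF_0$ has no fold for $\alpha\in[\pi/2+\varepsilon,\,2.3]$, with $\varepsilon\approx 7.3\times 10^{-4}$. Your proposal is therefore not competing against a proof in the paper --- it is a research programme for an open problem, and you yourself flag the two places where it is incomplete.

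Those two gaps are exactly the ones the paper isolates in Conjecture~\ref{conj:new_uniqueSOPS}. First, your step (iii) requires running validated continuation on $[\pi/2+\delta,\,A]$ with $A$ at least $5.67$ (Xie's threshold). The paper's implementation stalls at $A=2.3$ for the reasons you anticipate --- the step size $\dal$ shrinks and the Fourier truncation dimension grows as $\alpha$ increases --- and nothing in your outline supplies the ``layer-adapted rescaling'' or sharper estimates needed to close the interval $[2.3,\,5.67]$. Second, and more seriously, your claim that ``$\mathcal{B}$ contains every SOPS'' is not delivered by the global Hopf machinery you invoke: global bifurcation gives an unbounded connected component, but it does \emph{not} exclude isolas of SOPS disjoint from $\cF_0$ (this is item~2 of Conjecture~\ref{conj:new_uniqueSOPS}). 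Your a~priori bounds on $p$, $\max u$, $\min u$ and $|c_k|$ would confine any SOPS to a region of the sequence space, but to conclude uniqueness you would need that region to lie inside the validated tube around $\cF_0$, and the paper's enclosures (balls $B_{x_\alpha}(r)$ with $r$ small) are far too thin for that; a separate argument ruling out isolas is required and is not supplied.

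In short: your Fourier/Newton--Kantorovich framework matches the paper's, but what you have written is the outline the paper itself follows to obtain only a partial result, together with an honest list of the obstructions that keep the full conjecture open.
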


It is worth mentioning that if Conjecture~\ref{conj:uniqueSOPS} is true, then the unique SOPS  
attracts a dense and open subset of the phase space (e.g. see \cite{MP_Walther}). Let us reformulate Conjecture~\ref{conj:uniqueSOPS}, considering the partial work that was done since Jones's comment in \cite{Jones1}. In 1977, Chow and Mallet-Paret showed that there is a supercritical (forward in $\alpha$) Hopf bifurcation of SOPS from the trivial solution at $\alpha=\pi/2$ \cite{C-MP1}. We denote this branch of SOPS by $\cF_0$. In 1989, Regala proved a result that implies that there cannot be any secondary bifurcation from $\cF_0$ \cite{regala}. Hence, $\cF_0$ is a regular curve in the $(\alpha,y)$ space. In 1991, Xie used asymptotic estimates for large $\alpha$ to prove that for $\alpha> 5.67$, (\ref{eq:wright}) has a unique SOPS up to a time translation  \cite{xie_thesis,xie1}. 
Here is a remark he made after he stated his result on p. 97 of his thesis \cite{xie_thesis}.
\begin{quote} {\em The result here may be further sharpened. However, [$\ldots$] the arguments here can not be used to prove the uniqueness result for SOPS of (\ref{eq:wright}) when $\alpha$ is close to $\frac{\pi}{2}$. }
\end{quote}

Hence, his method might help to decrease the value $5.67$, but new mathematical ideas are required to solve Conjecture~\ref{conj:uniqueSOPS}. Based on the above discussion, here is a reformulation of the remaining parts of the conjecture.

\begin{conjecture} \label{conj:new_uniqueSOPS}
Denote by $\cF_0$ the branch of SOPS that bifurcates (forward in $\alpha$) at $\pi/2$. Then 
\begin{enumerate}
\item $\cF_0$ does not have any fold in $\alpha \in (\frac{\pi}{2},5.67]$;
\item there are no connected components (isolas) of SOPS in $\alpha \in (\frac{\pi}{2},5.67]$.
\end{enumerate}
\end{conjecture}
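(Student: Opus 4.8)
The plan is to recast the search for SOPS as a zero-finding problem for an analytic map on a Banach space of Fourier coefficients and then to certify the relevant non-degeneracy of the branch $\cF_0$ by a computer-assisted continuation. First I would exploit periodicity: a SOPS has some unknown minimal period $p$, so after the rescaling $t\mapsto \frac{p}{2\pi}t$ the rescaled solution $u$ is $2\pi$-periodic and solves $u'(t) = -\frac{p}{2\pi}\alpha\, u\!\left(t-\tfrac{2\pi}{p}\right)\!\left[1+u(t)\right]$. Expanding $u(t)=\sum_{k\in\mathbb{Z}} c_k e^{ikt}$ turns the delay into multiplication of $c_k$ by $e^{-2\pi i k/p}$ and the quadratic term into a discrete convolution, producing an infinite system of polynomial equations $F_k(c,p,\alpha)=0$ in the unknown $x=(p,c)$, with the reality condition $c_k=\overline{c_{-k}}$ and a phase condition (say $\mathrm{Im}\,c_1=0$) added to quotient out the time-translation symmetry. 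The defining sign pattern of the Definition ($y>0$ on $(0,q)$, $y<0$ on $(q,p)$, $q>1$, $p>q+1$) will be verified a posteriori from the rigorous enclosures of the $c_k$.

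Next I would set up the functional analysis in $\ell^1_\nu=\{c:\sum_k |c_k|\nu^{|k|}<\infty\}$ for a decay rate $\nu>1$ (legitimate because SOPS are real-analytic), and build a Newton-like fixed-point operator $T_\alpha(x)=x-A\,F(x,\alpha)$ where $A$ is an injective approximate inverse of $DF$: a numerically computed inverse on finitely many low modes, glued to the explicit diagonal asymptotics of the tail. Via the radii-polynomial / Newton--Kantorovich inequalities, evaluated in interval arithmetic around a numerical solution $\bar x$, one obtains that $T_\alpha$ is a contraction on a small ball, hence possesses a unique true SOPS $x(\alpha)$ there; the same bounds force $DF(x(\alpha),\alpha)$ to be boundedly invertible, so by the implicit function theorem the solution continues as a $C^1$ curve. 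Covering the compact interval $[\frac{\pi}{2}+\delta,\,5.67]$ by finitely many overlapping parameter (better: pseudo-arclength) segments and running this contraction argument uniformly on each yields part~(1): ``no fold on this range'' is exactly the statement, already delivered by the contraction bounds, that the $2\pi$-periodic linearization $DF(x(\alpha),\alpha)$ stays nonsingular off the symmetry direction along the validated branch, together with verifying that the continuation tangent never turns vertical, i.e. $\partial_\alpha$ never degenerates. The thin leftover interval $(\frac{\pi}{2},\frac{\pi}{2}+\delta]$ must be handled separately, because the Fourier amplitude tends to $0$ at $\pi/2$ and the normalization degenerates there: here I would invoke instead Chow--Mallet-Paret's supercritical, non-degenerate Hopf bifurcation together with Regala's exclusion of secondary bifurcations from $\cF_0$, which make $\cF_0$ a fold-free regular curve on a definite neighborhood of $\pi/2$; taking $\delta$ inside that neighborhood closes the gap.

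The main obstacle is part~(2), the absence of isolas. A continuation scheme only certifies the solutions it is handed, so excluding a connected component of SOPS sitting far from $\cF_0$ requires a genuinely global input. The plan would be to combine the computation with an a priori box: show every SOPS of (\ref{eq:wright}) with $\alpha\in(\frac{\pi}{2},5.67]$ has period and $C^0$-size confined to an explicit compact region $K$ (extracting uniform bounds from the integral and sign arguments behind the Jones and Xie estimates), then cover $K$ by validated boxes and prove that each one either contains no SOPS-type zero of $F$ or contains only the zero lying on $\cF_0$ --- equivalently, that the Leray--Schauder degree of the SOPS fixed-point map over $K$ equals $1$ and is entirely accounted for by $\cF_0$. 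The difficulty is that Xie's asymptotic estimates are tailored to large $\alpha$ and give no sharp, rigorously checkable confinement as $\alpha\downarrow\pi/2$; absent such a $K$ one can only conclude the fold-free statement of part~(1) along the portion of $\cF_0$ actually traversed by the continuation, which is precisely the partial answer announced above.
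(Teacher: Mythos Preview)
First, a framing remark: Conjecture~\ref{conj:new_uniqueSOPS} is stated in the paper as an \emph{open} conjecture; the paper does not prove it. What the paper does prove is the partial result of Theorem~\ref{thm:SOPS}, namely part~(1) restricted to $\alpha\in[\tfrac{\pi}{2}+\varepsilon,2.3]$ with $\varepsilon\approx 7.3\times 10^{-4}$. Your proposal should therefore be read against that partial proof, not against a nonexistent full proof.

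On part~(1), your overall scheme --- Fourier reduction to an infinite algebraic system, a Newton-like operator $T=x-Af$, radii-polynomial contraction bounds, and rigorous parameter continuation over overlapping subintervals --- is essentially the paper's method. The cosmetic differences (you work in $\ell^1_\nu$ with geometric weights, the paper in $\Omega^s$ with algebraic weights; you fix the phase by $\mathrm{Im}\,c_1=0$, the paper by $y(0)=0$) are immaterial. Two points, however, are substantive. First, you implicitly assume the validated continuation will run all the way to $5.67$; the paper's implementation stalls at $2.3$ for concrete computational reasons (shrinking step size, interval-arithmetic cost of the convolution bounds), and nothing in your outline addresses why your version would fare better. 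Second, and more seriously, your treatment of the window $(\tfrac{\pi}{2},\tfrac{\pi}{2}+\delta]$ has a genuine gap. You appeal to the supercritical Hopf result of Chow--Mallet-Paret and to Regala to obtain a ``definite neighborhood'' of $\tfrac{\pi}{2}$ on which $\cF_0$ is fold-free, and then propose to take $\delta$ inside it. But those results are non-quantitative: they give \emph{some} neighborhood, with no explicit radius, so you cannot rigorously match it to the $\varepsilon$ at which your validated continuation begins. (Regala's result, moreover, excludes secondary bifurcations, not folds; folds are regular points of the curve where only the projection to $\alpha$ degenerates.) The paper handles this differently and constructively: it rescales $y=\beta z$, obtaining the desingularized problem~(\ref{eq:new_wright1}) with continuation parameter $\beta\ge 0$, for which the Hopf point becomes a \emph{hyperbolic} nontrivial solution at $\beta=0$. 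A second validated continuation in $\beta$ then rigorously carries the branch from $\beta=0$ up to a value where it provably coincides (via Proposition~\ref{prop:continuum}) with the left endpoint of the $\alpha$-continuation. This is the missing ingredient in your plan: without an explicit, computer-verifiable link across the bifurcation, the two pieces of your argument do not join.

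On part~(2), your diagnosis is correct and matches the paper's stance: validated continuation certifies only the branch it follows, so excluding isolas requires independent global input (a priori confinement of all SOPS to a compact box plus an exhaustive degree or exclusion argument). The paper does not attempt part~(2), and your sketch, while reasonable in outline, correctly identifies that the needed uniform a priori bounds for $\alpha$ near $\tfrac{\pi}{2}$ are not currently available.
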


\begin{figure}[ht!]
\centering
\includegraphics[width=8cm]{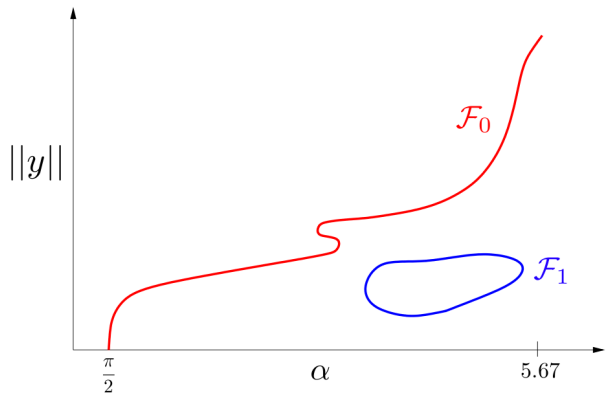}
\caption{Conjecture~\ref{conj:new_uniqueSOPS} fails if in the parameter range corresponding to $\alpha \in (\frac{\pi}{2},5.67]$, there exists a fold on $\cF_0$ or there exists an isola $\cF_1$ of SOPS.}
\label{fig:isolas}
\end{figure}

In this paper, we propose to use a method called {\em validated continuation} in the parameter $\alpha$ to partially prove the first part of Conjecture~\ref{conj:new_uniqueSOPS}. This method was originally introduced in \cite{DLM} as a computationally efficient tool to compute equilibrium solutions of partial differential equations (PDEs) with polynomial nonlinearities. It was then adapted to compute equilibria of PDEs for large (discrete) range of parameter values \cite{GLM}. Afterward, it was combined with variational methods and tools from algebraic topology to prove the existence of chaos for a class of fourth order nonlinear ordinary differential equations \cite{BL}. In \cite{BLM}, validated continuation was generalized to compute global smooth branches of solution curves of differential equations, both in the context of parameter and pseudo-arclength continuation. Finally, in a forthcoming work, the method is adjusted to compute equilibria of high dimensional PDEs \cite{GL}. In this paper, we use the theory of validated continuation developed in \cite{BLM} to compute a global continuous curve of SOPS of Wright's equation.

\begin{theorem} \label{thm:SOPS}
Let $\varepsilon=7.3165 \times 10^{-4}$. Then the part of $\cF_0$ corresponding to the parameter range $\alpha \in \left[ \frac{\pi}{2}+\varepsilon,2.3 \right]$ does not have any fold.
\end{theorem}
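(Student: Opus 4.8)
The plan is to recast the search for SOPS as a rigorously solvable zero-finding problem in a Banach space of rapidly decaying Fourier coefficients, and then to run the validated continuation scheme of \cite{BLM} in the parameter $\alpha$ over the entire interval $\left[\tfrac{\pi}{2}+\varepsilon,2.3\right]$. First I would rescale time by the unknown minimal period $p$, so that a $p$-periodic solution of (\ref{eq:wright}) becomes a $2\pi$-periodic function while the delay $1$ becomes a phase shift in the rescaled argument; writing the rescaled solution as $\sum_{k\in\mathbb Z} a_k e^{ikt}$ and exploiting that the nonlinearity $y(t-1)[1+y(t)]$ is quadratic, I obtain a countable algebraic system in the sequence $a=(a_k)_{k\in\mathbb Z}$ and the scalar $p$, in which products of functions become discrete convolutions of sequences. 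A SOPS is analytic, so its Fourier coefficients decay geometrically and the natural ambient space is a weighted $\ell^1$ space $\ell^1_\nu$ with $\nu>1$, on which convolution is a bounded bilinear map (Banach algebra) — this is what makes the fixed-point estimates tractable. To break the $S^1$ translation symmetry I would impose one real anchoring condition (e.g. $\mathrm{Im}\,a_1=0$) and additionally record the SOPS sign pattern of the Definition (one positive hump on an interval of length $q>1$, one negative hump, $p>q+1$) as finitely many inequalities to be checked a posteriori. This produces a map $F(a,p,\alpha)=0$ whose solutions, for $\alpha$ in the stated range, are exactly the points of $\cF_0$.

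Next I would fix a Galerkin truncation to $m$ Fourier modes, compute by ordinary continuation a piecewise-smooth approximate branch $\bar x(\alpha)=(\bar a(\alpha),\bar p(\alpha))$ over $\left[\tfrac{\pi}{2}+\varepsilon,2.3\right]$, and build the Newton-like operator $T_\alpha(x)=x-A_\alpha F(x,\alpha)$, where $A_\alpha$ is an injective approximate inverse of the Fr\'echet derivative $D_xF(\bar x(\alpha),\alpha)$: on the finite block $A_\alpha$ is a numerically computed, interval-enclosed inverse of the Jacobian, while on the infinite tail it is the explicit diagonal operator coming from the dominant term $ika_k$ of $D_xF$, which grows in $|k|$ and therefore dominates the bounded, convolution-type remainder for $|k|>m$. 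Following \cite{BLM}, I would derive \emph{uniform} bounds $Y$ and $Z(r)$, valid for all $\alpha$ in each of finitely many subintervals partitioning $\left[\tfrac{\pi}{2}+\varepsilon,2.3\right]$, so that $\|T_\alpha(\bar x(\alpha))-\bar x(\alpha)\|\le Y$ and $T_\alpha$ has Lipschitz constant at most $Z(r)/r$ on the ball of radius $r$ about $\bar x(\alpha)$. Whenever the radii polynomial $r\mapsto Z(r)+Y-r$ is negative for some $r_\ast>0$, the contraction mapping theorem yields a unique zero $x(\alpha)$ of $F(\cdot,\alpha)$ in that ball, depending real-analytically on $\alpha$; moreover the estimate $\|I-A_\alpha D_xF(x(\alpha),\alpha)\|<1$ used in the argument forces $D_xF(x(\alpha),\alpha)$ to be invertible at every $\alpha$ in the range.

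Invertibility of $D_xF$ along the solution curve guarantees that the curve is locally a graph over $\alpha$, i.e. $d\alpha/ds\neq0$ in arclength; since this holds at every $\alpha\in\left[\tfrac{\pi}{2}+\varepsilon,2.3\right]$ and since $\cF_0$ is a regular curve by Regala's result, the finitely many verified pieces glue into a single analytic sub-arc of $\cF_0$ with no fold on that range, and checking the recorded sign inequalities on the rigorous enclosures confirms it is genuinely a sub-arc of the SOPS branch $\cF_0$. The proof is thus finite: choose $m$, partition the $\alpha$-interval, evaluate the radii polynomials and the sign conditions in interval arithmetic, and verify that each radii polynomial is negative somewhere.

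The main obstacle is the degeneracy at the Hopf point $\alpha=\pi/2$: as $\alpha\downarrow\pi/2$ the amplitude of the SOPS tends to $0$ while $D_xF$ approaches a singular operator, so the approximate-inverse norms blow up, the $Z$ bound deteriorates, and the radii polynomial loses its negative value — this is precisely why the theorem requires $\varepsilon>0$, and pushing $\varepsilon$ down to $7.3165\times10^{-4}$ is what forces a large truncation dimension $m$ and tight interval enclosures near the left endpoint. The secondary difficulty, and the reason the stated range stops well short of $5.67$, is that as $\alpha$ grows the SOPS develop increasingly sharp transition layers, so the geometric decay rate $\nu$ of the Fourier coefficients worsens and $m$ must grow accordingly; extending the argument to all of $\left(\tfrac{\pi}{2},5.67\right]$, let alone adding the isola-exclusion part of Conjecture~\ref{conj:new_uniqueSOPS}, would require both a more economical functional-analytic setup and a genuinely global (rather than branch-following) nonexistence argument.
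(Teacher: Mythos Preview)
Your overall strategy matches the paper's first half: recast SOPS as zeros of an infinite-dimensional map in Fourier space, build a Newton-like fixed-point operator with a numerically inverted finite block and an explicit diagonal tail inverse, and verify radii-polynomial contraction conditions uniformly over a finite partition of $[\tfrac{\pi}{2}+\varepsilon,2.3]$ in interval arithmetic. (The paper works in the algebraically weighted sup-space $\Omega^s$ rather than a geometric $\ell^1_\nu$, but this is largely a matter of taste and does not affect the logic.)

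There is, however, a genuine gap in your argument: you do not explain how to certify that the branch you compute is a sub-arc of $\cF_0$ rather than of some other connected component of the SOPS set. Verifying the sign pattern from the Definition on your enclosures only shows that your solutions are \emph{some} SOPS; it does not exclude that you have landed on an isola $\cF_1$ as in Figure~\ref{fig:isolas}. Regala's result tells you $\cF_0$ carries no secondary bifurcations, but says nothing about which component you are on. The statement of Theorem~\ref{thm:SOPS} concerns specifically $\cF_0$, the branch emanating from the Hopf point at $\alpha=\pi/2$, and your contraction argument cannot be initiated at $\alpha=\pi/2$ precisely because $D_xF$ degenerates there --- which is the very reason for the $\varepsilon$ you yourself identify.

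The paper closes this gap with a second, independent validated continuation (Section~\ref{sec:2nd_part}): it desingularizes the Hopf point by the blow-up $y=\beta z$, obtaining the problem~(\ref{eq:new_wright1}) in which $\beta\ge 0$ is the continuation parameter and $\alpha$ becomes an unknown. At $\beta=0$ this problem has the explicit \emph{nondegenerate} solution $z(t)=-\tfrac{2}{\pi}\sin(\tfrac{\pi}{2}t)$ with $\alpha=L=\pi/2$, so validated continuation in $\beta$ can be started exactly at the Hopf point and run up to some $\beta_0>0$. One then matches, via the change of variables $y=\beta z$ and an overlap argument in the style of Proposition~\ref{prop:continuum}, the right endpoint of this $\beta$-branch with the left endpoint of the $\alpha$-branch computed over $[\tfrac{\pi}{2}+\varepsilon,2.3]$, thereby proving $\cF_0^*\subset\cF_0$. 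This regularization-and-matching step is the missing idea in your proposal.
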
 

\begin{figure}[ht!]
\centering
\includegraphics[width=8cm]{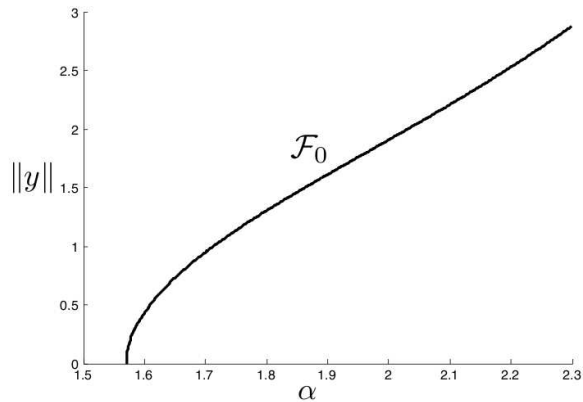}
\caption{Geometric representation of the result of Theorem~\ref{thm:SOPS}. This curve represents a rigorous computation of a section of the set $\cF_0$. On the picture, the vertical axis is given by $\| y \| = \sup \left\{ |y(t)|~;~{t \in [0,p],~{\rm where}~  p~ {\rm is~the~period~of~} y}\right\}$.
}
\label{fig:uniqueSOPS}
\end{figure}
For a geometric representation of Theorem~\ref{thm:SOPS}, we refer to Figure~\ref{fig:uniqueSOPS}. Before going into the details of the proof, let us make a few comments on the statement of Theorem~\ref{thm:SOPS}. The reason why the result is valid only up to $\alpha=2.3$ does not have any theoretical justification. This is purely computational. In fact, when $\alpha$ grows, the proof becomes computationally difficult mainly because of the following facts. First of all, our computer-assited proof requires the computation of several sums which we compute using iterative loops with the {\em Matlab} interval arithmetic package {\em Intlab} \cite{Intlab} which is slow to evaluate loops of large size. A second observation is that the step size $\dal$ in the parameter $\alpha$ decreases significantly when one increases the parameter $\alpha$. Hence, for larger $\alpha$, the rigorous continuation still runs, but the step size decreases significantly. We come back to these issues in Section~\ref{sec:conclusion}, where we make suggestions on how to possibly improve the result of Theorem~\ref{thm:SOPS}. 

Another comment regarding Theorem~\ref{thm:SOPS} is that validated continuation in $\alpha$ cannot help ruling out the existence of a fold in the parameter range $\alpha \in ~ ] \pi/2,\pi/2+\eps[$. This is due to the fact that the method requires having contractions which are uniform in the parameter $\alpha$. Because the trivial periodic solution $y=0$ is non hyperbolic at $\alpha=\pi/2$, the uniform contraction in the parameter $\alpha$ fails to exist near $\alpha=\pi/2$. That raises the following question: How can we make sure that the global branch of SOPS obtained with validated continuation for $\alpha \in [\pi/2+\eps,2.3]$ actually comes from the Hopf bifurcation at $\alpha=\pi/2$? It turns out that we can {\em regularize} the problem at $\alpha=\pi/2$ with the change of variable $y(t)=\beta z(t)$ and obtain a new problem (with continuation parameter $\beta \ge 0$) having a non trivial hyperbolic periodic solution $z(t)$ at $\beta=0$ and $\alpha=\pi/2$. This new problem, having now $\alpha$ as a variable (as opposed to a parameter), can be studied with validated continuation again, since uniform contractions can be proved to exist near $\beta=0$ and $\alpha=\pi/2$. This is done in Section~\ref{sec:2nd_part}, where a rigorous continuation in the new parameter $\beta \ge 0$ is performed in order to show that the branch of SOPS that we computed in the parameter interval $\alpha \in [\pi/2+\eps,2.3]$ is in fact the one that bifurcates from the trivial solution at $\alpha=\pi/2$. 

Finally, it is important to mention that the value of $\eps$ can be made smaller using our method. The choice of $\eps=7.3165 \times 10^{-4}$ is made arbitrarily and we believe that with significant extra computational effort, this value can be pushed down up to $\eps=1 \times 10^{-8}$. Once again, we discuss this possible improvement in Section~\ref{sec:conclusion}.

The paper is organized as follows.  In Section~\ref{sec:f_set_up}, we transform the study of periodic solutions of (\ref{eq:wright}) into the study of the solutions of a parameter dependent infinite dimensional problem $f(x,\alpha)=0$. In Section~\ref{sec:fs_fpe}, the problem $f(x,\alpha)=0$ is modified into an equivalent fixed point problem $T(x,\alpha)=x$, whose fixed points correspond to zeros of $f$. The equivalence of the problem is shown and the functional analysis setting is introduced. In Section~\ref{sec:validated_continuation}, we introduce the validated continuation method in the fashion of \cite{BLM}. In Section~\ref{sec:proof}, we prove Theorem~\ref{thm:SOPS} and finally, we conclude with possible improvements in Section~\ref{sec:conclusion}. The computer programs used to assist the proof of  Theorem~\ref{thm:SOPS} can be found at \cite{code}.

%
%
%
%
%
%
%
%
%
%
\section{Set up of the problem \boldmath$f(x,\alpha)=0$\unboldmath} \label{sec:f_set_up}
The goal of this section is to transform the problem of looking for periodic solutions $y(t+p)=y(t)$ of (\ref{eq:wright}) into the study of the solutions of a parameter dependent infinite dimensional problem $f(x,\alpha)=0$. Let us introduce $L$ to be the a priori unknown frequency of the periodic solution $y$. In other words, $p=\frac{2 \pi}{L}$. Hence, consider the following expansion of the periodic solution $y$ in Fourier series
\begin{equation} \label{eq:complex_fourier}
y(t)=\sum_{k=-\infty}^{\infty} c_k e^{ikLt},
\end{equation}
where the $c_k$ are complex numbers satisfying $c_{-k}=\overline{c_k}$. This is due to the fact that $y \in \mathbb{R}$. Plugging the two expressions
\[ y(t-1)=\sum_{k=-\infty}^{\infty} c_k e^{-ikL}e^{ikLt} ~~{\rm and}~~ y'(t)=\sum_{k=-\infty}^{\infty} c_k ikL e^{ikLt} \]
in (\ref{eq:wright}) and putting all terms on one side of the equality, one gets a new problem to solve for, namely
\[
\sum_{k_0=-\infty}^{\infty}  \left[ ik_0L+ \alpha e^{-ik_0L} \right] c_{k_0} e^{ik_0Lt} + \alpha \left[ \sum_{k_1=-\infty}^{\infty} c_{k_1} e^{-ik_1L} e^{ik_1Lt} \right]
\left[ \sum_{k_2=-\infty}^{\infty} c_{k_2} e^{ik_2Lt} \right] =0.
\]
The left hand side of this last equation being a periodic solution with period $\frac{2 \pi}{L}$, one computes its Fourier coefficients by taking the inner product with $e^{ikLt}$, for $k \in \mathbb{Z}$. This procedure leads to the following countable system of equations
\begin{equation} \label{eq:g}
g_k \bydef\left[ ikL + \alpha e^{-ikL} \right] c_k + \alpha
\sum_{k_1+k_2=k} e^{-ik_1 L} c_{k_1} c_{k_2} =0 ,~~ k \in \mathbb{Z}.
\end{equation}
Since $c_{-k}=\overline{c_k}$ implies that $g_{-k}=\overline{g_k}$, we only need to consider the cases $k \ge 0$ when solving for (\ref{eq:g}).  Note that the frequency $L$ of $y$ being unknown, we leave it variable and we are going to solve for it when solving $f=0$. Denoting the real and the imaginary part of $c_k$ respectively by $a_k$ and $b_k$, an equivalent expansion for (\ref{eq:complex_fourier}) is given by
\begin{equation} \label{eq:real_fourier}
y(t)=a_0+2 \sum_{k=1}^{\infty} \left[ a_k \cos{kLt}-b_k \sin{kLt}
\right] . \end{equation}
Note that $a_k=a_{-k}$ and $b_k=-b_{-k}$. Hence, we get that $b_0=0$. Let 
\[
x_k \bydef \left\{ \begin{array}{ccc}  (L,a_0) ,~k=0 \\ 
(a_k,b_k) ,~ k >0 \end{array} \right.
\]
and $x \bydef (x_0,x_1,\cdots,x_k,\cdots)^T$.  Let us denote by $x_{k,1}$ and $x_{k,2}$ the first and the second component of $x_k \in \mathbb{R}^2$, respectively. In order to eliminate arbitrary shifts, we impose the normalizing condition $y(0)=a_0+2 \sum_{k=1}^{\infty} a_k=0$. Hence, let us introduce the following function $h$, which will ensure, by solving $h=0$, that the scaling condition $y(0)=0$ is satisfied:
\[
h(x) \bydef a_0+2 \sum_{k=1}^{\infty} a_k .
\]
For $k\ge 0$, consider the real and the imaginary parts of $g_k$, given respectively by
\begin{eqnarray} \label{eq:fk1}
Re(g_k) (x,\alpha) &=& ( \alpha \cos{kL} ) a_k + (-kL+\alpha \sin{kL}) b_k \label{eq:f_k1}
\\
&& \nonumber + \alpha
\sum_{k_1+k_2=k}(\cos{k_1 L}) (a_{k_1}a_{k_2}-b_{k_1}b_{k_2}) +
(\sin{k_1 L}) (a_{k_1}b_{k_2}+b_{k_1}a_{k_2}), \\
Im(g_k) (x,\alpha)  &=& \label{eq:f_k2}
-(-kL+\alpha \sin{kL}) a_k + (\alpha \cos{kL}) b_k \label{eq:fk2}
\\ && + \alpha \nonumber
\sum_{k_1+k_2=k}-(\sin{k_1 L}) (a_{k_1}a_{k_2}-b_{k_1}b_{k_2}) +
(\cos{k_1 L}) (a_{k_1}b_{k_2}+b_{k_1}a_{k_2}). 
\end{eqnarray}
Note that $g_{-k}=\overline{g_k}$ implies that $Im(g_0) = 0$. Hence, we do not incorporate $Im(g_0)$ in the formulation of $f$. Hence, the function $f$ is defined component-wise by
\[
f_k(x,\alpha)= \left\{
\begin{array}{ll} 
\left( 
\begin{array}{cc}
h(x)  \\ Re(g_0)(x,\alpha)
\end{array}
\right),~k=0
\\
\left( 
\begin{array}{cc}
Re(g_k)(x,\alpha)  \\ Im(g_k)(x,\alpha)
\end{array}
\right),~k>0
\end{array} \right. 
\]

Consider the notation $f_{k,1}$ (resp. $f_{k,2}$) to denote the first (resp. second) component of $f_k \in \mathbb{R}^2$. Defining $f=\{f_k\}_{k \ge 0}$, we show in Section~\ref{sec:fs_fpe} that finding periodic solution $y(t)$ of (\ref{eq:wright}) satisfying $y(0)=0$ is equivalent to finding solutions of the infinite dimensional parameter dependent problem
\begin{equation} \label{eq:f}
f(x,\alpha)=0.
\end{equation}
%
%
%
%
%
%
%
%
%
%
%
\section{Set up of the fixed point equation \boldmath$T(x,\alpha)=x$\unboldmath ~and functional analysis setting} \label{sec:fs_fpe}

The purpose of this section is to transform the problem $f(x,\alpha)=0$ into a fixed point equation $T(x,\alpha)=x$. Then, the idea will be to apply an uniform contraction mapping argument on $T$. Let us first put ourself in a functional analysis setting by introducing a Banach space which is convenient for our study. The key ingredient in defining the space is that periodic solutions of Wright's equation are $C^\infty$ \cite{nussbaum3}. This implies that the Fourier coefficients of the expansion (\ref{eq:real_fourier}) goes to zero faster than any algebraic decay. For $s >0$, consider the weights
\begin{equation}\label{e:weights}
  \omega_k = \left\{ 
  \begin{array}{ll}
 ~ 1, &  k =0; \\
  |k|^s, & k \ne 0. \\
  \end{array}
 \right.
\end{equation}
These weights are used to define the norm
\begin{equation}\label{e:norm}
  \|x\|_s \bydef \sup_{k=0,1,\dots} |x_k|_\infty \omega_k ,
\end{equation}
where $|x_k|_\infty=\max \{ |x_{k,1}|,|x_{k,2}|\}$, and the sequence space
$$
  \Omega^s = \{ x=(x_0,x_1,x_2,\dots) \, , \, \| x \|_s < \infty \},
$$
consisting of sequences with algebraically decaying tails. Since the Fourier coefficients $\{x_k\}_{k\ge 0}$ decay faster than any given power of $k$, the set $\Omega^s$ contains all sequences $(L,a_0,a_1,b_1,\dots)$ obtained from the Fourier expansion (\ref{eq:real_fourier}) of any periodic solutions of (\ref{eq:wright}). We are ready to define the fixed point operator $T$.

First of all, note that $T$ will partially be constructed with the help of the computer. For that matter, we then truncate the infinite dimensional problem (\ref{eq:f}) into a finite dimensional one. More precisely, consider the finite dimensional projection $f^{(m)}:\mathbb{R}^{2m} \times \mathbb{R} \rightarrow \mathbb{R}^{2m}$ defined component-wise by
\begin{equation} \label{eq:f_Galerkin}
f_k^{(m)}(x_0,\dots,x_{m-1},\alpha)  \bydef f_k \left( (x_0,\dots,x_{m-1},0_\infty),\alpha \right) , ~k=0,\dots,m-1,
\end{equation}
where $0_\infty=(0)_{j \ge 0}$. Consider a parameter value $\alpha_0>\pi/2$. Recall from the discussion in Section~\ref{sec:introduction} that since we aim for a contraction mapping argument, we consider only parameter values $\alpha_0>\pi/2$. Indeed, at $\alpha_0=\pi/2$, the trivial solution is non hyperbolic, meaning that $D_xf(0,\pi/2)$ is not injective. Suppose that at $\alpha_0$, we computed numerically $\bx \in \mathbb{R}^{2m}$ such that 
\begin{equation} \label{eq:approx_zero}
f^{(m)}(\bx,\alpha_0) \approx 0. 
\end{equation}
This is done with a Newton-like iterative scheme. To simplify the presentation, we identify $\bx=(\bL,\ba_0,\ba_1,\bb_1,\dots,\ba_{m-1},\bb_{m-1})^T$ with $(\bx,0_\infty)$. Define 
\[
\Lambda_k \bydef \frac{\partial f_k}{\partial x_k} (\bx,\alpha_0)=
\left(
\begin{array}{cccc} 
\frac{\partial f_{k,1}}{\partial x_{k,1}} (\bx,\alpha_0) & \frac{\partial f_{k,1}}{\partial x_{k,2}} (\bx,\alpha_0) \\
\frac{\partial f_{k,2}}{\partial x_{k,1}} (\bx,\alpha_0) & \frac{\partial f_{k,2}}{\partial x_{k,2}} (\bx,\alpha_0)
\end{array}
\right).
\]
We use the subscript $(\cdot)_{_F}$ to denote the $2(2m-1)$ entries corresponding to $k=0,\cdots,2m-2$.  Let $J_{_F}$ be a numerical approximation of the inverse of  $D_xf^{(2m-1)}(\bx,\alpha_0)$, $0_2$ be the $2 \times 2$ zero matrix and let $\zf$ be the $2 \times 2(2m-1)$ zero matrix. Let
\begin{equation} \label{eq:A}
A \bydef \left[\begin{array}{cccccc}
J_{_F}&\,& \zft & \zft & \zft & \cdots \\[2mm]
\zf &\,& \Lambda^{-1}_{2m-1} & 0_2 &0_2 &\cdots \\
\zf &\,& 0_2 & \Lambda^{-1}_{2m} & 0_2 & \cdots \\
\zf &\,& 0_2 & 0_2 & \Lambda^{-1}_{2m+1} &  \\
\vdots         &\,& \vdots & \vdots & & \ddots  
        \end{array}\right] \, ,
\end{equation}
which acts as an \emph{approximate inverse} of the linear operator $D_xf(\bx,\alpha_0)$. More precisely, given $x \in \Omega^s$, one has that 
\begin{equation} \label{eq:def_A}
Ax=\left(J_{_F}x_{_F}, \Lambda^{-1}_{2m-1} x_{2m-1},\Lambda^{-1}_{2m} x_{2m},\dots \right).
\end{equation}
\begin{lemma} Given (\ref{eq:A}) and (\ref{eq:def_A}), we have that $A: \Omega^s \rightarrow \Omega^{s+1}$.
\end{lemma}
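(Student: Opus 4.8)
The plan is to show that the operator $A$ maps $\Omega^s$ into $\Omega^{s+1}$, i.e. that for any $x \in \Omega^s$ the sequence $Ax$ has weighted norm $\|Ax\|_{s+1} < \infty$. Since $\Omega^{s+1}$-membership is determined entirely by the tail behaviour of the coefficients, and the first $2m-1$ blocks of $Ax$ are just a finite-dimensional linear image $J_{_F} x_{_F}$, those finitely many components contribute only a finite amount to any norm and can be dispensed with at the outset. The whole content of the lemma therefore lies in the tail blocks $k \ge 2m-1$, where $(Ax)_k = \Lambda_k^{-1} x_k$.

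First I would write down $\Lambda_k = \partial f_k / \partial x_k (\bx,\alpha_0)$ explicitly from the formulas (\ref{eq:f_k1})--(\ref{eq:fk2}). For $k$ in the tail, the contribution from the convolution sum $\sum_{k_1+k_2=k}$ to $\partial f_k/\partial x_k$ involves only the zeroth mode $\bx_0 = (\bL,\ba_0)$ paired with $x_k$ (since $\bx$ has all coefficients beyond index $m-1$ equal to zero and $k \ge 2m-1 > m-1$), so $\Lambda_k$ is, up to an $O(1)$ perturbation coming from $\ba_0$, the $2\times 2$ block
\[
\begin{pmatrix} \alpha_0 \cos kL & -kL + \alpha_0 \sin kL \\ kL - \alpha_0 \sin kL & \alpha_0 \cos kL \end{pmatrix},
\]
whose determinant is $(\alpha_0\cos kL)^2 + (kL - \alpha_0 \sin kL)^2$. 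The key estimate is that this determinant grows like $k^2 L^2$ as $k \to \infty$: more precisely, for $k$ large enough, $|kL - \alpha_0 \sin kL| \ge kL - \alpha_0 \ge \frac{1}{2} kL$, so the determinant is bounded below by a constant times $k^2$. Consequently $\|\Lambda_k^{-1}\|_\infty \le C/k$ for all $k$ beyond some threshold, and hence $|(\Lambda_k^{-1} x_k)|_\infty \le (C/k) |x_k|_\infty \le (C/k) \|x\|_s / \omega_k = C \|x\|_s / k^{s+1}$ for $k \ne 0$.

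From here the conclusion is immediate: $|(Ax)_k|_\infty \, \omega_k^{(s+1)} = |(Ax)_k|_\infty \, k^{s+1} \le C \|x\|_s$ for all tail indices $k$, and the finitely many remaining components are trivially bounded, so $\|Ax\|_{s+1} \le C' \|x\|_s < \infty$, giving $Ax \in \Omega^{s+1}$. I expect the one genuine technical point to be making the lower bound on $\det \Lambda_k$ uniform: one must confirm that $\Lambda_k$ is actually invertible for every $k \ge 2m-1$ (not merely asymptotically), including handling the $\ba_0$-dependent correction terms, and rule out the degenerate possibility that $kL - \alpha_0\sin kL$ and $\alpha_0 \cos kL$ vanish simultaneously for some tail index. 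This is where one uses $L > 0$ together with $k$ being bounded away from zero; in the rigorous computer-assisted version this amounts to an explicit check, but for the purpose of the lemma it suffices to observe that for $k$ large the $-kL$ term dominates and forces invertibility with the stated decay rate, while the finitely many intermediate $k$ can be absorbed into the constant.
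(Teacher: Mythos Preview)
Your proposal is correct and follows essentially the same route as the paper: both reduce the question to the tail blocks $k \ge 2m-1$, establish a bound of the form $\|\Lambda_k^{-1}\| \le C/k$ from the explicit $2\times 2$ structure of $\Lambda_k$, and conclude that $|(Ax)_k|_\infty k^{s+1} \le C\|x\|_s$. The only cosmetic difference is that the paper packages the key estimate as a separate result (Lemma~\ref{lm:Xi_M}, giving the component-wise bound $|\Lambda_k^{-1}| \lec \frac{1}{k}\Xi$ uniformly for all $k \ge 2m-1$ via the hypothesis~(\ref{eq:inv_condition2}) on $m$), whereas you sketch the same estimate inline and absorb the intermediate indices into the constant.
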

\begin{proof}
First of all, there exists a constant $2 \times 2$ matrix $\Xi$ such that
\[
\left| {\Lambda_k}^{-1} \right| \lec \frac{1}{k} \Xi,
\]
for all $k \ge 2m-1$ (see Lemma~\ref{lm:Xi_M}), where $| \cdot |$ means component-wise absolute values and $\lec$ means component-wise inequalities. Considering $x \in \Omega^s$, one gets that
\begin{eqnarray*}
\|Ax\|_{s+1} &=& \max \left\{ |(Ax)_0|_\infty , \max_{k=1,\dots,2m-2} |(Ax)_k|_\infty k^{s+1},\sup_{k \ge 2m-1 } |(Ax)_k|_\infty k^{s+1}  \right\}  \\
&=&  \max \left\{ |(J_{_F}x_{_F})_0|_\infty , \max_{k=1,\dots,2m-2} |(J_{_F}x_{_F})_k|_\infty k^{s+1},\sup_{k \ge 2m-1 } | \Lambda_k^{-1}x_k |_\infty k^{s+1}  \right\} \\
&\le& \max \left\{ |(J_{_F}x_{_F})_0|_\infty , \max_{k=1,\dots,2m-2} |(J_{_F}x_{_F})_k|_\infty k^{s+1},\sup_{k \ge 2m-1 } |  \Xi x_k |_\infty k^s  \right\}\\
& <& \infty,
\end{eqnarray*}
because $\|x\|_s=\sup_{k\ge 0} | x_k |_\infty \omega_k < \infty$ and $\Xi$ is a constant matrix.
\end{proof}
Let us comment on how, in practice, we make sure that the linear operator $A$ is invertible. First of all, we verify that 
\begin{equation} \label{eq:inv_condition1}
\| J_{_F} D_x f^{(2m-1)}(\bx,\alpha_0) -I_{_F} \|_\infty<1,
\end{equation}
with $I_{_F}$ being the $2(2m-1) \times 2(2m-1)$ identity matrix. If such inequality is satisfied, we get that $J_{_F}$ is invertible. Recalling the definitions of $f_{k,1}$ and $f_{k,2}$ given in (\ref{eq:f_k1}) and (\ref{eq:f_k2}), respectively, and considering $k \ge 2m-1$, we get that 
\begin{equation} \label{eq:Lambda_k} 
\Lambda_k = 
\left( \begin{array}{cc} \tau_k & \delta_k \\ -\delta_k & \tau_k \end{array} \right),
\end{equation}
where $\tau_k\bydef\alpha_0 \ba_0 +\alpha_0  (1+\ba_0) \cos{k\bL}$ and $\delta_k\bydef-k\bL+\alpha_0(1+\ba_0) \sin{k\bL}$. Hence, a sufficient condition for $\Lambda_k$ to be invertible for all $k \ge 2m-1$ is that 
\begin{equation} \label{eq:inv_condition2}
m > \frac{1}{2} \left[ \frac{\alpha_0 |1+\ba_0|}{\bL} + 1 \right].
\end{equation}
Indeed, by (\ref{eq:inv_condition2}), we get that $\delta_k<0$ for all $k \ge 2m-1$ and we can conclude that $det(\Lambda_k) = \tau_k^2+\delta_k^2>0$, for all $k \ge 2m-1$. Hence, if conditions (\ref{eq:inv_condition1}) and (\ref{eq:inv_condition2}) hold, the linear operator $A$ defined in (\ref{eq:A}) is invertible. 

Given a parameter value $\alpha \ge \alpha_0$, we define the fixed point operator $T:\Omega^s \times \mathbb{R}$ to $\Omega^{s}$ by 
\begin{equation} \label{eq:T_alpha}
T(x,\alpha)=x-A f(x,\alpha) 
\end{equation}

It is now important to remark that even if we constructed the operator $T$ in a computer-assisted fashion, we still think of it as an abstract object. The finite part is stored on a computer, and the tail part, consisting of the sequence of matrices $\{ \Lambda_k^{-1} \}_{k \ge 2m-1}$, is defined abstractly.

\begin{lemma}\label{l:technical}
We have the following:
\begin{theoremlist}
\item
Let $s_0  \ge 2$ and fix $\alpha$.
Zeros of $f(x,\alpha)$, or, equivalently, 
fixed points of $T(x,\alpha)$, that are in $\Omega^{s_0}$, 
are in $\Omega^{s}$ for all $s \geq s_0$.
\item\label{i:iff}
Let $s \ge 2$. 
A sequence $x=(x_0,x_1,x_2,\dots) \in \Omega^s$ is a zero of $f$, or a fixed
point of $T$, 
\emph{if and only if} $y$ given by (\ref{eq:real_fourier}) is a periodic
solution of~(\ref{eq:wright}) with $y(0)=0$.
\end{theoremlist}
\end{lemma}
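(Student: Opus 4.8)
The plan is to prove Lemma~\ref{l:technical} in two movements, first establishing the elementary bootstrap of part (a), and then carefully unwinding the equivalences underlying part (b).

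For part (a), the key observation is that the tail equations $g_k = 0$ for $k$ large express $c_k$ in terms of a convolution of the $c_j$ with $j$ of smaller absolute value (or at least with a nonvanishing leading coefficient $ikL + \alpha e^{-ikL}$). Concretely, from (\ref{eq:g}) one has, for $k$ large enough that $ikL + \alpha e^{-ikL} \ne 0$ (which holds for all $k \ge 2m-1$ by essentially the computation behind (\ref{eq:inv_condition2})),
\[
c_k = -\frac{\alpha}{ikL + \alpha e^{-ikL}} \sum_{k_1 + k_2 = k} e^{-ik_1 L} c_{k_1} c_{k_2}.
\]
I would first recall that $\Omega^s$ is a Banach algebra under the convolution product when $s \ge 2$ (the standard estimate $\sum_{k_1+k_2=k}\omega_{k_1}^{-1}\omega_{k_2}^{-1} \le C\,\omega_k^{-1}$ for $s \ge 2$), so that if $x \in \Omega^{s_0}$ then the convolution on the right lies in $\Omega^{s_0}$ as well. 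Dividing by a coefficient of size $\gtrsim k$ then gains one power of $k$, so $\{c_k\}_{k \ge 2m-1} \in \Omega^{s_0 + 1}$, and since only finitely many remaining components are unconstrained by this gain, $x \in \Omega^{s_0+1}$. Iterating the argument gives $x \in \Omega^{s}$ for every $s \ge s_0$. The equivalence "zero of $f$ iff fixed point of $T$" in part (a) is immediate from (\ref{eq:T_alpha}) together with the injectivity of $A$: $Af(x,\alpha) = 0$ forces $f(x,\alpha) = 0$ because $A$ is invertible (conditions (\ref{eq:inv_condition1}) and (\ref{eq:inv_condition2})).

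For part (b), the "if and only if" splits naturally. For the forward direction I would start from $x \in \Omega^s$ with $f(x,\alpha) = 0$: the vanishing of the components $g_k$ for $k \ge 0$, together with the symmetry $g_{-k} = \overline{g_k}$, means that all Fourier coefficients of the function
\[
F(t) \bydef y'(t) + \alpha y(t-1)(1 + y(t))
\]
vanish, where $y$ is defined by (\ref{eq:real_fourier}). Here the point is that $s \ge 2$ guarantees enough regularity: the Fourier series (\ref{eq:real_fourier}) and its termwise derivative converge absolutely and uniformly, so $y \in C^1$, the products $y(t-1)y(t)$ are legitimately computed by convolving Fourier coefficients (again using the Banach algebra property), and $F$ is a genuine continuous periodic function whose Fourier coefficients are exactly the $g_k$. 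A continuous function with all Fourier coefficients zero is identically zero, so $y$ solves (\ref{eq:wright}). The component $h(x) = 0$ is precisely $y(0) = 0$, and the slowly-oscillating/minimal-period aspects are not part of this lemma (they come later in the branch-following), so nothing more is needed. Conversely, if $y$ is a periodic solution of (\ref{eq:wright}) with $y(0)=0$, then by the smoothing result of \cite{nussbaum3} $y$ is $C^\infty$, hence its Fourier coefficients decay faster than any polynomial, so the associated sequence $x$ lies in $\Omega^s$ for every $s$; plugging the expansion into the equation and matching Fourier coefficients yields $g_k = 0$ for all $k$ and $h(x) = 0$, i.e. $f(x,\alpha) = 0$. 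Finally, the "or a fixed point of $T$" clause again follows from the invertibility of $A$.

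I expect the main obstacle to be bookkeeping rather than depth: one must be careful that all the formal manipulations — termwise differentiation of (\ref{eq:real_fourier}), termwise multiplication producing the convolution sums in (\ref{eq:fk1})--(\ref{eq:fk2}), and the identification of $\{g_k\}$ with the Fourier coefficients of $F$ — are justified by the decay that $s \ge 2$ provides, and in particular that $\Omega^s$ with $s \ge 2$ is closed under convolution with the right norm bound. A secondary point requiring a little care is verifying that the nonvanishing of the diagonal symbol $ikL + \alpha e^{-ikL}$ for large $k$ (needed in part (a)) indeed holds under the standing hypotheses on the numerically computed solution; this is exactly the content of the estimate behind (\ref{eq:inv_condition2}), so it can be quoted. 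Everything else is a routine unwinding of definitions.
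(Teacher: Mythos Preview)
Your proposal is correct and follows essentially the same approach as the paper: part (a) is the same convolution-plus-symbol bootstrap (the paper cites the convolution bound (\ref{eq:upper_boundS1_tail0}) from Lemma~\ref{lem:estimates_sum_tail_case} rather than phrasing it as a Banach-algebra property, but the content is identical), and part (b) is the same termwise-differentiation/Nussbaum-regularity argument. The only minor difference is that in the forward direction of (b) the paper first invokes part (a) to upgrade to super-algebraic decay before differentiating term by term, whereas you argue directly that $s\ge 2$ suffices; both are valid, and your remark about (\ref{eq:inv_condition2}) is slightly imprecise (that condition concerns the numerical data $\bar L,\bar a_0$, not the actual $L$ of the zero), but the underlying fact $|ikL+\alpha e^{-ikL}|\gtrsim k$ for large $k$ is immediate for any $L\ne 0$.
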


\begin{proof}
For part (a), equivalence of zeros of $f$ and fixed points of $T$ is obvious, since the operator $A$ is invertible. Suppose there exists $x \in \Omega^{s_0}$ such that $f(x,\alpha)=0$. Recalling that $x_k=(a_k,b_k)$ for $k \ge 1$, that $c_k=a_k+i b_k$ and equation (\ref{eq:g}), we get that $g_k=0$, for every $ k \ge 0$. Hence, for all $k \ge 0$, we get that
\begin{equation} \label{eq:lemma_proof}
\left[ ikL + \alpha e^{-ikL} \right] c_k = - \alpha
\sum_{k_1+k_2=k} e^{-ik_1 L} c_{k_1} c_{k_2} .
\end{equation}
However, we have that 
\[
\left| \sum_{k_1+k_2=k} e^{-ik_1 L} c_{k_1} c_{k_2} \right| \le
2 \|x\|_{s_0}^2 \left| \sum_{k_1+k_2=k} \frac{1}{\omega_{k_1} \omega_{k_2}} \right| \le \frac{B}{k^{s_0}},
\]
where $B\ge 0$ is independent of $k$ (see equation (\ref{eq:upper_boundS1_tail0}) in Lemma~\ref{lem:estimates_sum_tail_case}). Combining this inequality with (\ref{eq:lemma_proof}), we get that $k^{s_0+1} c_k$ is uniformly bounded. This implies that $x \in \Omega^{s_0+1}$. Repeating this argument, we can conclude that zeros of $f(x,\alpha)$ that are in $\Omega^{s_0}$, are in $\Omega^{s}$ for all $s \geq s_0$.

Finally, because the tail of a fixed point of $T$ decays faster than any algebraic
rate, all sums may be differentiated term by term, hence $y$ defined by
(\ref{eq:real_fourier}) is a periodic solution of (\ref{eq:wright}) with $y(0)=0$. On the other hand, any periodic solution of (\ref{eq:wright}) is $C^\infty$, hence the tail of its Fourier transform decays faster than any algebraic rate, and thus, by standard arguments, 
the Fourier transform solves $f=0$, and part (b) follows.
\end{proof}

We are now ready to introduce validated continuation.

\section{Validated Continuation} \label{sec:validated_continuation}
Validated continuation \cite{BL, BLM, DLM, GL, GLM} is a rigorous computational method to continue, as we move a parameter, the zeros of infinite dimensional parameter dependent problems. In our context, we use this technique to continue solutions of (\ref{eq:f}), as we move the parameter $\alpha$. Lemma~\ref{l:technical}\ref{i:iff} shows that the problem of finding periodic solutions $y$ of
(\ref{eq:wright}) such that $y(0)=0$ is equivalent to studying fixed points of $T$. 
We will find balls in $\Omega^s$ on which $T$, for fixed $\alpha$, is a contraction mapping,
thus leading to periodic solutions $y$ of~(\ref{eq:wright}) satisfying $y(0)=0$. 

Let $\alpha_0>\pi/2$ considered in Section~\ref{sec:fs_fpe} and suppose that we computed a {\it tangent} $\dx \in \mathbb{R}^{2m}$ such that  
\begin{equation} \label{eq:tangent}
D_xf^{(m)}(\bx,\alpha_0) \dx + \frac{\partial f^{(m)}}{\partial \alpha} (\bx,\alpha_0) \approx 0.
\end{equation}
As in Section~\ref{sec:fs_fpe}, we identify $\dx =(\dL,\da_0,\da_1,\db_1,\dots,\da_{m-1},\db_{m-1})^T$ with $(\dx,0_\infty)$. Let us define the ball of radius $r$ in $\Omega^s$ (with norm $\|\cdot\|_s$) , centered at the origin, 
\begin{equation} \label{eq:Br}
B(r) \bydef \prod_{k=0}^{\infty} \left[ -\frac{r}{\omega_k},\frac{r}{\omega_k} \right]^2
\end{equation}
so that a point $b \in B(r)$ can be factored $b=ur$, with $u \in B(1)$. For $\dal = \alpha-\alpha_0 \ge 0$, we define the {\it predictor based at $\alpha_0$} by
\begin{equation} \label{eq:x_alpha}
x_\alpha=\bx+ \dal \dx 
\end{equation} 
and balls centered at $x_\alpha$
\begin{equation} \label{eq:W_x_alpha}
B_{x_\alpha}(r)= x_\alpha + B(r). 
\end{equation}
\begin{definition}
Let $u,v \in \mathbb{R}^{m \times n}$. We define the component-wise inequality by $\lec$ and say that $u \lec v$ if $u_{i,j} \le v_{i,j}$, for all $i=1,\dots,m$  and $j=1,\dots,n$.
\end{definition}
To show that $T$ is a contraction mapping, we need component-wise positive bounds
$Y_k={\tiny  \left( \hspace{-.2cm} \begin{array}{cc} Y_{k,1} \\ Y_{k,2} \end{array}\hspace{-.2cm} \right)},Z_k = {\tiny  \left( \hspace{-.2cm} \begin{array}{cc} Z_{k,1} \\ Z_{k,2} \end{array}\hspace{-.2cm} \right)} \in\mathbb{R}^2$ for each $k \ge 0 $, such that, with $\dal=\alpha-\alpha_0$,
\begin{equation} \label{eq:Y_k}
  \Bigl| [T(x_\alpha,\alpha)-x_\alpha]_k \Bigr| \lec Y_k(\dal) , 
\end{equation}
and
\begin{equation} \label{eq:Z_k}
 \sup_{b,c \in B(r)} \Bigl| [D_xT(x_\alpha+b,\alpha)c]_k \Bigr|
  \lec Z_k(r,\dal) .
\end{equation}
We will find such bounds in Sections~\ref{sec:Y} and~\ref{sec:Z},
respectively. We only consider $\dal \ge 0$, since we initiate the continuation at the parameter value $\alpha_0=\frac{\pi}{2}+\varepsilon$ and move forward. The proof of the following Lemma can be found in \cite{BL}.

\begin{lemma}\label{lem:ex_uni}
Fix $s\ge 2 $ and $\alpha=\alpha_0+\dal$.
If there exists an $r>0$ such that $\|Y + Z\|_s <r$, 
with $Y=(Y_0,Y_1,\dots)$ and $Z=(Z_0,Z_1,\dots)$ the bounds
as defined in
(\ref{eq:Y_k}) and (\ref{eq:Z_k}),
then there is a unique $\tx_\alpha \in B_{x_\alpha}(r)$ such that $f(\tx_\alpha,\alpha)=0$.
\end{lemma}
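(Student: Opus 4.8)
The statement to prove is Lemma~\ref{lem:ex_uni}: fixing $s \ge 2$ and $\alpha = \alpha_0 + \dal$, if there exists $r > 0$ with $\|Y + Z\|_s < r$, then $T(\cdot,\alpha)$ has a unique fixed point in $B_{x_\alpha}(r)$, i.e.\ a unique zero of $f(\cdot,\alpha)$ there.

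\textbf{Proposal.} The plan is to apply the Banach fixed point theorem to the map $T(\cdot,\alpha)$ restricted to the closed ball $B_{x_\alpha}(r) = x_\alpha + B(r)$, which is a complete metric space as a closed subset of the Banach space $(\Omega^s, \|\cdot\|_s)$. Two things must be checked: that $T(\cdot,\alpha)$ maps $B_{x_\alpha}(r)$ into itself, and that it is a contraction there. For the self-mapping property, take any $x = x_\alpha + b$ with $b \in B(r)$ and estimate, componentwise, $[T(x,\alpha) - x_\alpha]_k = [T(x_\alpha,\alpha) - x_\alpha]_k + [T(x,\alpha) - T(x_\alpha,\alpha)]_k$. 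The first term is bounded componentwise by $Y_k(\dal)$ using (\ref{eq:Y_k}). For the second, I would write it as an integral of the derivative, $[T(x_\alpha + b,\alpha) - T(x_\alpha,\alpha)]_k = \int_0^1 [D_xT(x_\alpha + tb,\alpha)\,b]_k\,dt$, and bound each integrand componentwise by $Z_k(r,\dal)$ via (\ref{eq:Z_k}), since $tb \in B(r)$ and $b \in B(r)$ for $t \in [0,1]$ (the ball $B(r)$ is convex and balanced). Hence $\bigl|[T(x,\alpha) - x_\alpha]_k\bigr| \lec Y_k(\dal) + Z_k(r,\dal)$ for every $k$, so $\|T(x,\alpha) - x_\alpha\|_s \le \|Y + Z\|_s < r$, which shows $T(x,\alpha) \in B_{x_\alpha}(r)$; in fact it lands in the open ball, which will be used for uniqueness.

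For the contraction property, take $x = x_\alpha + b$ and $x' = x_\alpha + b'$ with $b, b' \in B(r)$ and write $[T(x,\alpha) - T(x',\alpha)]_k = \int_0^1 [D_xT(x' + t(b - b'),\alpha)(b - b')]_k\,dt$. Here $x' + t(b-b') = x_\alpha + (b' + t(b-b'))$ with $b' + t(b - b') \in B(r)$ by convexity; however $b - b'$ need not lie in $B(r)$ but in $B(2r)$, so one rescales: writing $b - b' = 2r\cdot\frac{b-b'}{2r}$ with $\frac{b-b'}{2r} \in B(1)$ and using the linearity of $D_xT$ in its second slot together with the fact that the bound $Z_k(r,\dal)$ in (\ref{eq:Z_k}) is taken as a supremum over $c \in B(r)$, one gets $\bigl|[T(x,\alpha) - T(x',\alpha)]_k\bigr| \lec \frac{\|b - b'\|_s}{r}\,Z_k(r,\dal)$, hence $\|T(x,\alpha) - T(x',\alpha)\|_s \le \frac{\|Z\|_s}{r}\|x - x'\|_s$. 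Since $\|Z\|_s \le \|Y + Z\|_s < r$ (the bounds $Y_k$ are componentwise nonnegative, so $\|Z\|_s \le \|Y+Z\|_s$), the contraction constant $\kappa \bydef \|Z\|_s / r$ satisfies $\kappa < 1$. Banach's theorem then yields a unique fixed point $\tx_\alpha \in B_{x_\alpha}(r)$; uniqueness on the closed ball follows because a second fixed point would also be a fixed point, and iterating $T$ from it stays in the closed ball and converges to $\tx_\alpha$. By Lemma~\ref{l:technical}, this fixed point is precisely a zero of $f(\cdot,\alpha)$.

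\textbf{Main obstacle.} The delicate point is the bookkeeping of the radial rescalings: $Z_k(r,\dal)$ is defined via a supremum over $c \in B(r)$ (not $B(1)$), and the natural difference vectors ($b$, $b - b'$, $tb$) that appear in the mean-value estimates do not all lie in $B(r)$ on the nose, so one must carefully exploit the homogeneity of $D_xT$ in $c$ and the convexity/balancedness of $B(r)$ to route every term through the available bounds without losing the strict inequality. A secondary technical point is justifying the integral (mean value) representation of $T(x,\alpha) - T(x',\alpha)$ componentwise in the infinite-dimensional space $\Omega^s$; this is standard given that $f$, and hence $T$, is (componentwise) polynomial in $x$ with sufficiently fast-decaying tails, so each component is smooth and the fundamental theorem of calculus applies entrywise. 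Since the lemma is quoted from \cite{BL}, I would keep this argument brief and refer there for the full details.
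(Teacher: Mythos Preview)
Your argument is correct and is precisely the standard Banach fixed-point proof used in the validated-continuation literature; the paper itself does not give a proof but simply cites \cite{BL}, where exactly this argument appears. One small expository wobble: in the contraction step you mention the rescaling $b-b' = 2r\cdot\frac{b-b'}{2r}$, but what you actually use (and what yields the stated bound $\frac{\|b-b'\|_s}{r}\,Z_k$) is the sharper rescaling $b-b' = \frac{\|b-b'\|_s}{r}\cdot\frac{r(b-b')}{\|b-b'\|_s}$ with the second factor in $B(r)$; the $2r$ version would only give a factor $2$ and a weaker contraction constant, so you should state the correct rescaling explicitly.
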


In order to verify the hypotheses of Lemma~\ref{lem:ex_uni} in a
computationally efficient way, we introduce the notion of 
\emph{radii polynomials}.  
Namely, as will become clear in Sections~\ref{sec:Y} and~\ref{sec:Z},
the functions $Y_k(\dal)$ and $Z_k(r,\dal)$ are polynomials in their
independent variables. In fact, they are constructed to be monotone increasing in $\dal$. Also, for $k \geq M \bydef 2m-1$, where $m$ is the dimension of the finite dimensional projection $f^{(m)}$, one may choose  
$$
  Y_k={\tiny  \left( \hspace{-.1cm} \begin{array}{cc} 0 \\ 0 \end{array}\hspace{-.1cm} \right)}, \qquad\text{and}\qquad  Z_k = \hat{Z}_M \left(\frac{M^s}{\omega_k}\right),
$$
where $\hat{Z}_M(r,\dal) \gc {\tiny  \left( \hspace{-.1cm} \begin{array}{cc} 0 \\ 0 \end{array}\hspace{-.1cm} \right)}$ is independent of $k$. The choice $M=2m-1$ will be justified in Section~\ref{sec:Y}. This leads us to the following definition.
\begin{definition}\label{def:radpol}
Let $Y_k(\dal)={\tiny  \left( \hspace{-.1cm} \begin{array}{cc} 0 \\ 0 \end{array}\hspace{-.1cm} \right)}$ and $Z_k(r,\dal) = \hat{Z}_M(r,\dal) \left(\frac{M^s}{\omega_k}\right) $ for all $k \geq M$.
We define the $2M$ \emph{radii polynomials} $\{ p_0, \ldots,p_{M-1},p_M\}$ by
$$
 p_k(r,\dal) \bydef \left\{ \begin{array}{ll}
    Y_k(\dal)+Z_k(r,\dal)-\frac{r}{\omega_k}{\tiny  \left( \hspace{-.1cm} \begin{array}{cc} 1 \\ 1 \end{array}\hspace{-.1cm} \right)} , & k=0,\ldots,M-1; \\[1mm]  
     \hat{Z}_M(r,\dal) - \frac{r}{\omega_M} {\tiny  \left( \hspace{-.1cm} \begin{array}{cc} 1 \\ 1 \end{array}\hspace{-.1cm} \right)} & k = M.
  \end{array} \right. 
$$
\end{definition}

The following result was first considered in \cite{BLM}. 
\begin{lemma}
\label{lem:radpol}
If there
  exists an $r>0$ and $\dal \ge 0$ such that $p_k(r,\dal) <0$ for
  all $k=0,\ldots,M$, then there exist a $C^\infty $ function $\tx :
  [\alpha_0,\alpha_0+\dal] \rightarrow \Omega^s: \alpha \mapsto \tx(\alpha)$ such
  that $f(\tx(\alpha),\alpha)=0$ for all $\alpha \in
  [\alpha_0,\alpha_0+\dal]$. Furthermore, these are the only solutions
  of $f(x,\alpha)=0$ in the tube $\{ \alpha \in [\alpha_0,\alpha_0+\dal] , x-x_\alpha \in
  B(r) \}$.
\end{lemma}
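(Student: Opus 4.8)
\textbf{Proof proposal for Lemma~\ref{lem:radpol}.}

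The plan is to reduce the statement to Lemma~\ref{lem:ex_uni} applied uniformly over the parameter interval, together with a standard implicit-function-theorem argument to recover smoothness of the resulting branch. First I would observe that, by construction (see the monotonicity remark before Definition~\ref{def:radpol}), each $Y_k(\dal)$ and $Z_k(r,\dal)$ is monotone increasing in $\dal$, so if $p_k(r,\dal)<0$ holds at the endpoint $\dal$, then $p_k(r,\dal')<0$ holds for every $\dal'\in[0,\dal]$, hence for every $\alpha'\in[\alpha_0,\alpha_0+\dal]$. Next I would translate the sign conditions $p_k(r,\dal')<0$ for $k=0,\dots,M-1$ together with $\hat Z_M(r,\dal')<r/\omega_M$ into the single inequality $\|Y(\dal')+Z(r,\dal')\|_s<r$: the finite components give $|Y_k+Z_k|_\infty\,\omega_k<r$ directly, while for $k\ge M$ one uses $Y_k=0$ and $Z_k=\hat Z_M(r,\dal')(M^s/\omega_k)$, so that $|Z_k|_\infty\,\omega_k = |\hat Z_M(r,\dal')|_\infty M^s = |\hat Z_M(r,\dal')|_\infty\,\omega_M < r$, using $\omega_M=M^s$. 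With this inequality in hand, Lemma~\ref{lem:ex_uni} provides, for each fixed $\alpha\in[\alpha_0,\alpha_0+\dal]$, a unique $\tx(\alpha)\in B_{x_\alpha}(r)$ with $f(\tx(\alpha),\alpha)=0$; uniqueness of these fixed points in the balls $B_{x_\alpha}(r)$ is exactly the ``only solutions in the tube'' claim.

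It remains to upgrade the pointwise-defined map $\alpha\mapsto\tx(\alpha)$ to a $C^\infty$ function. For this I would show that $D_xf(\tx(\alpha),\alpha)$ is invertible for each $\alpha$ in the interval: since $\tx(\alpha)$ is a fixed point of the contraction $T(\cdot,\alpha)=\mathrm{Id}-Af(\cdot,\alpha)$ and the contraction rate is governed by the $Z$-bound, the operator $AD_xf(\tx(\alpha),\alpha)$ is a small perturbation of the identity (its norm distance from $I$ being bounded by a quantity controlled by $Z$, hence $<1$), so $AD_xf(\tx(\alpha),\alpha)$ is invertible, and since $A$ is invertible, so is $D_xf(\tx(\alpha),\alpha)$. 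The implicit function theorem in the Banach space $\Omega^s$ then yields a locally unique $C^\infty$ (indeed analytic, since $f$ is analytic in its arguments) solution branch through each $\tx(\alpha)$; these local branches must agree with $\tx(\alpha)$ by the uniqueness already established, and so they patch together into a single $C^\infty$ curve on all of $[\alpha_0,\alpha_0+\dal]$.

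The main obstacle, and the step deserving the most care, is the passage from ``a contraction for each fixed $\alpha$'' to ``a smooth curve'': one must verify that the hypotheses of Lemma~\ref{lem:ex_uni} hold \emph{simultaneously} for all $\alpha'\in[\alpha_0,\alpha_0+\dal]$ with the \emph{same} radius $r$ (this is where monotonicity of the bounds in $\dal$ is essential) and that the predictor $x_{\alpha'}$ varies smoothly — which it does, being affine in $\alpha'$ by (\ref{eq:x_alpha}) — so that the tubes $B_{x_{\alpha'}}(r)$ fit together into a genuine neighborhood of the curve in $[\alpha_0,\alpha_0+\dal]\times\Omega^s$ on which the solution is unique. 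The invertibility of $D_xf$ along the branch is the other point requiring attention, but it follows cleanly from the contraction estimate as indicated. Everything else is bookkeeping with the norm $\|\cdot\|_s$ and the weights $\omega_k$.
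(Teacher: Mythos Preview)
Your reduction to Lemma~\ref{lem:ex_uni} is exactly what the paper does: use monotonicity in $\dal$ to get the radii-polynomial inequalities for every intermediate $\dal'\in[0,\dal]$, then read off $\|Y+Z\|_s<r$ from the finite components together with $Y_k=0$, $Z_k=\hat Z_M(M^s/\omega_k)$ for $k\ge M$. The pointwise existence/uniqueness and the ``only solutions in the tube'' claim follow identically.

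Where you diverge is in the smoothness step. The paper does \emph{not} invoke the implicit function theorem; instead it makes the change of variable $y=x-x_\alpha$, so that the family of contractions $T(\cdot,\alpha):B_{x_\alpha}(r)\to B_{x_\alpha}(r)$ becomes a single map $\widetilde T(\alpha,y)=T(y+x_\alpha,\alpha)$ which is a \emph{uniform} contraction on the fixed ball $B(r)$, with $\widetilde T$ jointly $C^\infty$ because $f\in C^\infty(\Omega^s,\Omega^{s-1})$ and $x_\alpha$ is affine in $\alpha$. The uniform contraction principle (as in Chow--Hale) then gives $C^\infty$ dependence of the fixed point on $\alpha$ in one stroke, on the whole interval, with no patching. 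Your route via invertibility of $D_xf$ and local IFT branches glued by uniqueness is correct --- the Neumann-series argument for invertibility of $AD_xf$ from the contraction bound is sound, and $A$ is a genuine isomorphism $\Omega^{s-1}\to\Omega^s$ --- but it is slightly longer and introduces the extra bookkeeping of covering $[\alpha_0,\alpha_0+\dal]$ by local charts. The paper's approach buys you global smoothness immediately; yours buys an explicit statement about $D_xf$ being invertible along the branch, which is not needed here but could be useful elsewhere.
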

\begin{proof}
By definition of the radii polynomials and because they satisfy $p_k(r,\dal) <0$ for
  all $k=0,\ldots,M$, and by the choice of $Y_k$ and $Z_k$ for $k \ge M$, we get that 
\[
\|Y + Z\|_s = \sup_{k=0,1,\dots} {\| Y_k(\dal)+Z_k(r,\dal) \|}_\infty \omega_k  < r .
\]
Since $p_k$ is increasing in $\dal \geq 0$ (see Remark~\ref{rem:pk_increasing}), existence and uniqueness of 
a solution $\tx(\alpha)$ for $\alpha \in [\alpha_0,\alpha_0+\dal]$
follows from Lemma~\ref{lem:ex_uni}. In particular, for every fixed $\alpha \in [\alpha_0,\alpha_0+\dal]$, $T(\cdot,\alpha):  B_{x_\alpha}(r) \rightarrow B_{x_\alpha}(r)$ is a contraction. Consider the change of variable $y=x- x_\alpha$. Then, the operator \[ \widetilde{T}:  [\alpha_0,\alpha_0+\dal] \times B(r) \rightarrow B(r):(\alpha,y) \mapsto \widetilde{T}(\alpha,y) \bydef T(y+x_\alpha,\alpha) \] 
is a uniform contraction on $B(r)$. Since $f  \in C^\infty \left( \Omega^s , \Omega^{s-1} \right)$, we have that $\widetilde{T} \in C^\infty \left([\alpha_0,\alpha_0+\dal] \times B(r) , B(r) \right)$. By the uniform contraction principle, we conclude that $\tx(\alpha)$ is a $C^\infty$ function of $\alpha$; see e.g. \cite{CH}.
\end{proof}

The remaining part of the section is taken almost verbatim from \cite{BLM}. 

In practice, we use an iterative procedure (with $\dal$ varying) to find the approximate
maximal $\dal^0$ (if it exists) for which there exists an $r>0$ such that the hypotheses of Lemma~\ref{lem:radpol} are satisfied. If this step is
successful, we let $\alpha_1 = \alpha_0+\dal^0$ and we obtained a continuum of zeros
$\C_0 = \left\{ \left(x^0(\alpha), \alpha \right) |~ f\left(x^0(\alpha), \alpha \right)=0,~~\alpha \in [\alpha_0,\alpha_1] \right\}$. We now want to repeat the argument with initial
parameter value $\alpha_1$. Hence, we put ourself in the context of a
continuation method, which involves a predictor and corrector step. Recalling the definition of the predictors based at $\alpha_0$ given by (\ref{eq:x_alpha}), the predictor at the parameter value
$\alpha_1=\alpha_0+\dal^0$ is given by $\hat{x}_1 \bydef \bx+\dal^0 \dx$. The
corrector step, based on a Newton-like iterative scheme on the projection
$f^{(m)}$, takes $\hat{x}_1$ as its input and produces, within a prescribed
tolerance, a zero $\bx_1$ at~$\alpha_1$. We can then compute a new tangent
vector $\dx_1$, built the new set of predictors $\bx_1+ \dal \dx_1$,
construct the bounds $Y,Z$ at the parameter value $\alpha_1$ and try to verify
the hypotheses of Lemma~\ref{lem:radpol} again. If we are successful in finding a new
$\dal^0$, we let $\alpha_2=\alpha_1+\dal^0$ and we get the existence of a
continuum of zeros $\C_1 = \left\{ \left( x^1(\alpha) ,\alpha \right) |~ f\left(
   x^1(\alpha), \alpha \right)=0,~~ \alpha \in [\alpha_1,\alpha_2] \right\}$. The question now is to determine whether or not
$\C_0$ and $\C_1$ connect at the parameter value $\alpha_1$ to form a continuum of zeros $\C_0 \cup \C_1$. At the parameter value
$\alpha_1$, we have two sets enclosing a unique zero
namely 
$$
  B_0 \bydef  \bx_0 + (\alpha_1-\alpha_0)\dx_0 +B(r_0), 
$$ 
and 
$$ 
  B_1 \bydef  \bx_1 + B(r_1).
$$
We want to prove that the solutions in $B_0$ and $B_1$ are the same. We return now to the radii polynomials
$p_k(r,\dal)$, $k=0,\dots,M$  constructed 
at basepoint $(x,\alpha)=(\bx_1,\alpha_1)$, and evaluate them at $\dal=0$:
$$
  \tilde{p}_k(r)=p_k(r,0).
$$
Since $\tilde{p}_k(r_1)<0$, we find a non empty interval $\cI \bydef
[r_1^-,r_1^+]$ containing $r_1$ such that $\tilde{p}_k(r)$ are all
strictly negative on $\cI$.
We now have two additional sets enclosing a unique zero at parameter value
$\alpha_1$, namely
$$
  B^\pm_1 \bydef  \bx_1 + B(r^\pm_1).  
$$
The proof of the following result can be found also in \cite{BLM}.
\begin{proposition} \label{prop:continuum} If $B_0 \subset
  B^+_1$ or $B^-_1 \subset B_0$, then
  $\C_0 \cup \C_1$ consists of a continuous branch of solutions of $f(x,\alpha)=0$, 
  and $\C_0 \cap \C_1 = \{(x^0(\alpha_1),\alpha_1\} =
  \{(x^1(\alpha_1),\alpha_1\} \in B_0 \cap B_1$. 
\end{proposition}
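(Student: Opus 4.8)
The plan is to exploit the uniqueness clause in Lemma~\ref{lem:radpol} applied to the radii polynomials $\tilde p_k(r)=p_k(r,0)$ based at $(\bx_1,\alpha_1)$, together with the uniqueness in Lemma~\ref{lem:ex_uni}. First I would record the three facts we are handed: (i) on the nonempty interval $\cI=[r_1^-,r_1^+]\ni r_1$ all $\tilde p_k(r)<0$, so for every $r\in\cI$ the ball $\bx_1+B(r)$ contains a unique zero of $f(\cdot,\alpha_1)$, and by monotonicity in $r$ of the enclosure these unique zeros all coincide --- call this common point $\tx(\alpha_1)$, which is exactly $x^1(\alpha_1)$; in particular $\tx(\alpha_1)\in B_1\cap B_1^-\cap B_1^+$; (ii) the continuum $\C_0$ supplies a zero $x^0(\alpha_1)$ lying in $B_0=\bx_0+(\alpha_1-\alpha_0)\dx_0+B(r_0)$, and by Lemma~\ref{lem:radpol} applied at basepoint $(\bx_0,\alpha_0)$ with step $\dal^0$ this is the unique zero of $f(\cdot,\alpha_1)$ in that ball.

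The core of the argument is then a containment-plus-uniqueness squeeze. In the case $B_0\subset B_1^+$: the point $x^0(\alpha_1)\in B_0\subset B_1^+$ is a zero of $f(\cdot,\alpha_1)$, but $\tilde p_k(r_1^+)<0$ for all $k$, so by Lemma~\ref{lem:ex_uni} there is a \emph{unique} zero of $f(\cdot,\alpha_1)$ in $B_1^+=\bx_1+B(r_1^+)$, namely $\tx(\alpha_1)=x^1(\alpha_1)$. Hence $x^0(\alpha_1)=x^1(\alpha_1)$. In the case $B_1^-\subset B_0$: now $\tx(\alpha_1)=x^1(\alpha_1)\in B_1^-\subset B_0$ is a zero of $f(\cdot,\alpha_1)$ in $B_0$, and by the uniqueness from Lemma~\ref{lem:radpol} at basepoint $(\bx_0,\alpha_0)$ the only zero of $f(\cdot,\alpha_1)$ in $B_0$ is $x^0(\alpha_1)$, so again $x^0(\alpha_1)=x^1(\alpha_1)$. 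Either way the two locally computed solution points at $\alpha=\alpha_1$ agree, so $\C_0\cap\C_1=\{(x^0(\alpha_1),\alpha_1)\}=\{(x^1(\alpha_1),\alpha_1)\}$, and this point lies in $B_0\cap B_1$.

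It remains to see that $\C_0\cup\C_1$ is a \emph{continuous} branch. Both $\C_0$ and $\C_1$ are images of $C^\infty$ (in particular continuous) maps $\alpha\mapsto x^0(\alpha)$ on $[\alpha_0,\alpha_1]$ and $\alpha\mapsto x^1(\alpha)$ on $[\alpha_1,\alpha_2]$ by Lemma~\ref{lem:radpol}; since they take the same value at the common endpoint $\alpha_1$, gluing them gives a continuous map on $[\alpha_0,\alpha_2]$ whose image is $\C_0\cup\C_1$, which is therefore a continuous branch of solutions of $f(x,\alpha)=0$.

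The step I expect to be the main obstacle --- or at least the one needing the most care --- is the monotonicity/consistency claim in (i): that as $r$ ranges over $\cI$ the \emph{unique} zeros furnished by Lemma~\ref{lem:ex_uni} in the nested balls $\bx_1+B(r)$ are literally the same point rather than merely ``a'' zero in each. This follows because the balls $B(r)$ are nested in $r$ and each larger ball still has a unique zero, so the zero in a smaller ball is also the zero in the larger one; one must simply make sure $r_1^-$ is chosen small enough that $B_1^-$ is nonempty and still contains $x^1(\alpha_1)$, which is guaranteed since $\tilde p_k(r)<0$ on all of $\cI$ and $r_1\in\cI$. Once that bookkeeping is pinned down, the rest is a direct application of the two uniqueness lemmas to whichever inclusion hypothesis holds.
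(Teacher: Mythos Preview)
The paper does not actually prove this proposition; it simply refers the reader to \cite{BLM}. Your argument is correct and is precisely the intended one: use the inclusion hypothesis to place both candidate zeros at $\alpha_1$ inside a single ball on which uniqueness from Lemma~\ref{lem:ex_uni} applies (either $B_1^+$ or $B_0$, depending on which inclusion holds), conclude $x^0(\alpha_1)=x^1(\alpha_1)$, and then glue the two $C^\infty$ parametrizations at the shared endpoint. Your closing remark about the nested balls $\bx_1+B(r)$ for $r\in\cI$ all containing the same unique zero is exactly the bookkeeping needed to ensure $x^1(\alpha_1)\in B_1^-$, and it is handled correctly.
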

\begin{figure}
\centerline{\includegraphics[width=7cm]{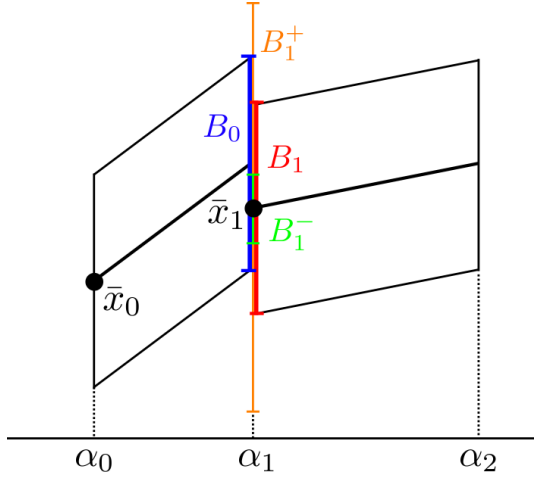}} 
\caption{$B_0 \cap B_1$ contains a unique zero of (\ref{eq:f}) and $\C_0 \cup \C_1$ consists of a continuum of zeros. This picture illustrates the hypotheses of Proposition~\ref{prop:continuum}.}
\label{fig:B_0B_1}
\end{figure}

We have now all the ingredients to prove Theorem~\ref{thm:SOPS}.

\section{The proof of Theorem~\ref{thm:SOPS} } \label{sec:proof}

The proof of Theorem~\ref{thm:SOPS} is {\em constructive} and it has two parts. The first one is a rigorous continuation in the parameter $\alpha \in [\pi/2+\eps,2.3]$ of a branch (denoted by $\cF^*_0$) of periodic solutions of (\ref{eq:wright}). This part of the proof is presented in Section~\ref{sec:1st_part}. The second part of the proof, presented in Section~\ref{sec:2nd_part}, verifies that $\cF^*_0 \subset \cF_0$. In other words, we prove that the global solution curve $\cF^*_0$, computed in the first part, belongs to the branch of SOPS that bifurcates from the trivial solution at $\alpha=\pi/2$.

Since we use validated continuation in the proof, we need to construct analytically the radii polynomials introduced in Definition~\ref{def:radpol}. Section~\ref{sec:Y} is dedicated to the computation of the bound $Y(\dal)$, defined component-wise by (\ref{eq:Y_k}), while Section~\ref{sec:Z} is dedicated to the computation of the bound $Z(r,\dal)$, defined component-wise by (\ref{eq:Z_k}).


\subsection{The analytic bound \boldmath$Y(\dal)$\unboldmath} 
\label{sec:Y}

The goal of this section is to construct an analytic expression for the bound $Y=Y(\dal)$ given by (\ref{eq:Y_k}). Recall that this bound satisfies the following component-wise inequalities:
\[  \Bigl| [T(x_\alpha,\alpha)-x_\alpha]_k \Bigr| = \Bigl| \left[ -A f(x_\alpha,\alpha) \right]_k \Bigr| \lec Y_k(\dal). \]
As mentioned in Section~\ref{sec:validated_continuation}, for a fixed value of $\alpha_0$, we consider $\alpha \ge \alpha_0$ and we let $\dal=\alpha-\alpha_0 \ge 0$. As a side remark, note that once the analytic bound $Y_k=Y_k(\dal)=Y_k(\alpha-\alpha_0)$ is derived, we use a computer program using interval arithmetic to get explicit numerical upper bound for $Y_k$. By definition of $f_k$ given by (\ref{eq:f_k1}) and (\ref{eq:f_k2}), observe that $f_k(x_\alpha,\alpha)={\tiny  \left( \hspace{-.1cm} \begin{array}{cc} 0 \\ 0 \end{array}\hspace{-.1cm} \right)}$ for $k \ge 2m-1$. This is due to the fact that $[x_\alpha]_k = {\tiny  \left( \hspace{-.1cm} \begin{array}{cc} 0 \\ 0 \end{array}\hspace{-.1cm} \right)}$ for $k \ge m$. By definition of $A$ given by (\ref{eq:A}), one can choose $Y_k(\dal)={\tiny  \left( \hspace{-.1cm} \begin{array}{cc} 0 \\ 0 \end{array}\hspace{-.1cm} \right)}$, for $k \ge 2m-1$. This fact justifies the choice of $M \bydef 2m-1$ already introduced in Section~\ref{sec:validated_continuation}. Now that $Y_k$ is constructed for the cases $k \ge M$, we are left with the cases $0 \le k \le M-1$. Given $i \in \{1,2\}$ and $k \in \{0,\ldots,2m-2\}$, let us compute the analytic bound $Y_{k,i}(\dal)$. As mentioned already in Section~\ref{sec:validated_continuation}, we want to construct $Y_{k,i}(\dal)$ as a polynomial in $\dal$. Recalling (\ref{eq:Y_k}), we begin by splitting the expression $f(x_\alpha,\alpha)$ in two terms. The first term, very small because of the choices of $\bx$ from (\ref{eq:approx_zero}) and $\dx$ from (\ref{eq:tangent}), does not require any further analysis. The second term, not necessarily small, is expanded as an analytic polynomial using the software {\em Maple} and then bounded using further analysis.

Let us now expand $f(x_\alpha,\alpha)$ component-wise as powers of $\dal$ using the function
\[ 
h_{k,i}^Y(\alpha)\bydef f_{k,i} (x_\alpha,\alpha) =f_{k,i}(\bx+(\alpha-\alpha_0)\dx,\alpha).
\]
Recalling that $\dal=\alpha-\alpha_0 \ge 0$, Taylor's theorem implies the existence of $\alpha_{k,i}^* \in [\alpha_0,\alpha]$ such that 
\begin{eqnarray*} 
f_{k,i} (x_\alpha,\alpha) &=& h_{k,i}^Y(\alpha) = h_{k,i}^Y(\alpha_0) + \frac{d h^Y_{k,i}}{d \alpha} (\alpha_0) (\alpha-\alpha_0) +\frac{1}{2} \frac{d^2 h^Y_{k,i}}{d \alpha^2} (\alpha_{k,i}^*) (\alpha-\alpha_0)^2 \\
&=& f_{k,i}(\bx,\alpha_0) + \left[ Df_{k,i}(\bx,\alpha_0)\dx + \frac{\partial f_{k,i}}{\partial \alpha} (\bx,\alpha_0)\right] \dal +\frac{1}{2} \frac{d^2 h^Y_{k,i}}{d \alpha^2} (\alpha_{k,i}^*) \dal^2.
\end{eqnarray*}
Letting
\begin{equation} \label{eq:rd01hat2}
\begin{array}{lll}
\rd_{k,i}^{(0)} \bydef f_{k,i}(\bx,\alpha_0); \\
\rd_{k,i}^{(1)} \bydef Df_{k,i}(\bx,\alpha_0)\dx + \frac{\partial f_{k,i}}{\partial \alpha} (\bx,\alpha_0); \\
\hat{\rd}_{k,i}^{(2)}(\alpha_{k,i}^*) \bydef \frac{1}{2} \frac{d^2 h^Y_{k,i}}{d \alpha^2} (\alpha_{k,i}^*) 
\end{array}
\end{equation} 
we have, as wanted, the following polynomial expression for $f_{k,i}$, namely
\begin{equation} \label{eq:fki_poly}
f_{k,i} (x_\alpha,\alpha) = \rd_{k,i}^{(0)} + \rd_{k,i}^{(1)} \dal + \hat{\rd}_{k,i}^{(2)}(\alpha_{k,i}^*) \dal^2. 
\end{equation}
As mentioned above, the choice of the expansion (\ref{eq:fki_poly}) is made because the coefficients $\rd_{k,i}^{(0)}$ and $\rd_{k,i}^{(1)}$ from (\ref{eq:rd01hat2}) are small. Indeed, $\rd_{k,i}^{(0)}$ is small since $(\bx,\alpha_0)$ is a numerical approximation of (\ref{eq:approx_zero}) and $\rd_{k,i}^{(1)}$ is small because $\dx$ is a numerical approximation of (\ref{eq:tangent}). 
In practice, $\rd_{k,i}^{(0)}$ and $\rd_{k,i}^{(1)}$ are evaluated using interval arithmetic. Hence, one can compute an explicit numerical upper bound for each of them. However, we cannot evaluate the quadratic coefficient $\hat{\rd}_{k,i}^{(2)}(\alpha_{k,i}^*)$ of (\ref{eq:fki_poly}) in the same fashion, because it depends on the unknown $\alpha_{k,i}^* \in [\alpha_0,\alpha]=[\alpha_0,\alpha_0+\dal]$. The idea here is to define the quantity $\dal^{(k,i)} \bydef \alpha_{k,i}^* -\alpha_0 \in [0,\dal]$ and to expand $\hat{\rd}_{k,i}^{(2)}(\alpha_{k,i}^*)=\hat{\rd}_{k,i}^{(2)}(\alpha_0+\dal^{(k,i)})$ as powers of $\dal^{(k,i)}$. Once this expansion is done, the next step will be to use the fact that 
\begin{equation} \label{eq:dal_star_ineq}
0 \le \dal^{(k,i)} \le \dal,~ {\rm for~all~} i \in \{1,2\} ~ {\rm and}~ k \in \{0,\ldots,2m-2\}.
\end{equation}
We will come back to (\ref{eq:dal_star_ineq}) later. Using the mathematical software {\em Maple}, we compute analytic expressions $\rd_{k,i}^{(2)}$, $\rd_{k,i}^{(3)}$, $\rd_{k,i}^{(4)}$ and $\rd_{k,i}^{(5)}$ so that
\begin{equation} \label{eq:hatd2}
\hat{\rd}_{k,i}^{(2)}(\alpha_0+\dal^{(k,i)}) = \sum_{j = 2}^5 \rd_{k,i}^{(j)} \left( \dal^{(k,i)}\right)^{j-2}.
\end{equation}
The {\em Maple} program {\em D.mw} generating the $\rd_{k,i}^{(j)}$, $j=2,3,4,5$ can be found at \cite{code}. The first part of the program differentiate $h_{k,i}^Y(\alpha)\bydef f_{k,i} (x_\alpha,\alpha)$ twice with respect to $\alpha$ and then expands $\hat{\rd}_{k,i}^{(2)}(\alpha_0+\dal^{(k,i)})$ in powers of $\dal^{(k,i)}$. For more technical details about the expansion (\ref{eq:hatd2}), we refer again to \cite{code}. Combining (\ref{eq:fki_poly}) and (\ref{eq:hatd2}), one gets that
\[  f_{k,i} (x_\alpha,\alpha) = \sum_{j = 0}^1 \rd_{k,i}^{(j)} \dal^j + \sum_{j = 2}^5 \rd_{k,i}^{(j)} \left( \dal^{(k,i)}\right)^{j-2}\dal^2 . \]
As mentioned earlier, we now use property (\ref{eq:dal_star_ineq}) and get rid of the dependence of $f_{k,i} (x_\alpha,\alpha)$ in terms of $\dal^{(k,i)}$. In order to do so, let us define 
\[
\rd_{_F}^{(j)}=\left( (\rd_{0,1}^{(j)},\rd_{0,2}^{(j)}) ,(\rd_{1,1}^{(j)},\rd_{1,2}^{(j)}),\dots,(\rd_{2m-2,1}^{(j)},\rd_{2m-2,2}^{(j)}) \right)^T, ~j=0,\dots,5.
\]
For $j=2,3,4,5$, let $\trd_{k,i}^{(j)} \bydef \rd_{k,i}^{(j)} \left( \dal^{(k,i)}\right)^{j-2}$ and 
\[
\trd_{_F}^{(j)}=\left( (\trd_{0,1}^{(j)},\trd_{0,2}^{(j)}) ,(\trd_{1,1}^{(j)},\trd_{1,2}^{(j)}),\dots,(\trd_{2m-2,1}^{(j)},\trd_{2m-2,2}^{(j)}) \right)^T, ~j=2,3,4,5.
\]
For the cases $k=0,\dots,2m-2$, we combine (\ref{eq:dal_star_ineq}) and triangle inequality to obtain that
\begin{eqnarray*}
\left| [T(x_\alpha,\alpha)-x_\alpha]_{_F} \right| &=&
\left| -J_{_F} f_{_F}(x_\alpha,\alpha) \right| \\
&=& \left| \sum_{j = 0}^1 J_{_F}  \rd_{_F}^{(j)} \dal^j + \sum_{j = 2}^5 J_{_F}  \trd_{_F}^{(j)} \dal^2 \right| \\
& \lec & \sum_{j = 0}^1 \left| J_{_F}  \rd_{_F}^{(j)} \right| \dal^j + \sum_{j = 2}^5 \left| J_{_F} \right|  \left| \rd_{_F}^{(j)} \right| \dal^j .
\end{eqnarray*}
As we mentioned before, the first part of the {\em Maple} program {\em D.mw} symbolically computes $\rd_{_F}^{(j)}$, for $j=2,3,4,5$. The second part of {\em D.mw} helps obtaining the analytic upper bounds $D_k^{(j)}$ ($j=2,3,4,5$) such that for $i=1,2$, $| \rd_{k,i}^{(j)} | \le D_k^{(j)}$. The bounds $D_k^{(j)}$ are presented in Table~\ref{t:bounds_Dk}. It is important to note that all sums presented in Table~\ref{t:bounds_Dk} are finite sums. Hence, we can use a computer to compute them rigorously using interval arithmetic. Note also that $D_{0,1}^{(j)}=0$ for all $j=2,3,4,5$.
\begin{table}
\newcommand{\cocc}[3]{$D_{#1}^{(#2)}$ & $#3$ \\[1mm] \hline}
\renewcommand{\arraystretch}{1.7}
\centerline{\tiny \begin{tabular}{|r@{\hspace{2mm}}|@{\hspace{2mm}}l|}
\hline
\multicolumn{2}{|c|}{$k = 0,\dots,2m-2$} \\ \hline
\cocc{k}{2}{ 
\begin{array}{llll}
k |\dL| \left( |\da_k|+ |\db_k| \right) +
\left| k\dL\ba_k + \frac{1}{2}\alpha_0 k^2 \dL^2 \bb_k + \alpha_0 k \dL \da_k - \db_k \right| +
\left| \da_k -\frac{1}{2}\alpha_0 k^2 \dL^2 \ba_k + k \dL \bb_k + \alpha_0 k \dL \db_k \right| 
\\
+{\displaystyle \sum_{k_1+k_2=k}} 
\left| -k_1 \dL \left( \ba_{k_1}\ba_{k_2}-\bb_{k_1}\bb_{k_2} \right) + \left( \ba_{k_1}\db_{k_2}+\da_{k_1}\bb_{k_2}+\bb_{k_1}\da_{k_2}+\db_{k_1}\ba_{k_2} \right) - \frac{1}{2}\alpha_0 k_1^2 \dL^2 \left( \ba_{k_1}\bb_{k_2}+\bb_{k_1}\ba_{k_2} \right) \right. \\
\left. \hspace{1.5cm}  -\alpha_0 k_1\dL \left( \ba_{k_1}\da_{k_2}+\da_{k_1}\ba_{k_2}-\bb_{k_1}\db_{k_2}-\db_{k_1}\bb_{k_2} \right) + \alpha_0  \left( \da_{k_1}\db_{k_2} + \db_{k_1}\da_{k_2} \right) \right| \\
+ {\displaystyle \sum_{k_1+k_2=k}}  \left| k_1\dL \left( \ba_{k_1}\bb_{k_2}+\bb_{k_1}\ba_{k_2} \right) + \left( \ba_{k_1}\da_{k_2}+\da_{k_1}\ba_{k_2}-\bb_{k_1}\db_{k_2}-\db_{k_1}\bb_{k_2} \right) 
- \frac{1}{2}\alpha_0 k_1^2 \dL^2 \left( \ba_{k_1}\ba_{k_2}-\bb_{k_1}\bb_{k_2} \right) \right.
\\
\left. \hspace{1.5cm} + \alpha_0 \left( \da_{k_1}\da_{k_2}- \db_{k_1}\db_{k_2} \right) + \alpha_0 k_1\dL \left( \ba_{k_1}\db_{k_2}+\da_{k_1}\bb_{k_2}+\bb_{k_1}\da_{k_2}+\db_{k_1}\ba_{k_2} \right) \right|
\end{array}
}
\cocc{k}{3}{ 
\begin{array}{lllll}
\left| 2 k \dL \da_k + \frac{1}{2}\alpha_0 k^2 \dL^2 \db_k + \frac{1}{2} k^2 \dL^2 \bb_k \right|
+ \left|  2 k\dL \db_k -\frac{1}{2}\alpha_0 k^2 \dL^2 \da_k - \frac{1}{2} k^2 \dL^2 \ba_k \right|
\\
+{\displaystyle \sum_{k_1+k_2=k}} 
\left| 2 k_1\dL \left( \ba_{k_1}\db_{k_2}+\da_{k_1}\bb_{k_2}+\bb_{k_1}\da_{k_2}+\db_{k_1}\ba_{k_2} \right) + 3 \left( \da_{k_1}\da_{k_2} - \db_{k_1}\db_{k_2} \right) 
+ 2 \alpha_0 k_1\dL \left( \da_{k_1}\db_{k_2} + \db_{k_1}\da_{k_2} \right) \right. 
\\
\left. \hspace{1.5cm} - \frac{1}{2}\alpha_0 k_1^2 \dL^2 \left( \ba_{k_1}\da_{k_2}+\da_{k_1}\ba_{k_2}-\bb_{k_1}\db_{k_2}-\db_{k_1}\bb_{k_2} \right) -\frac{1}{2} k_1^2\dL^2 \left( \ba_{k_1}\ba_{k_2}-\bb_{k_1}\bb_{k_2} \right) \right|
\\
+ {\displaystyle \sum_{k_1+k_2=k}} 
\left| - 2 k_1\dL \left( \ba_{k_1}\da_{k_2}+\da_{k_1}\ba_{k_2}-\bb_{k_1}\db_{k_2}-\db_{k_1}\bb_{k_2} \right) + 3 \left( \da_{k_1}\db_{k_2}+\db_{k_1}\da_{k_2} \right) 
-2\alpha_0 k_1\dL \left( \da_{k_1}\da_{k_2}- \db_{k_1}\db_{k_2} \right) \right.
\\
\left. \hspace{1.5cm} -\frac{1}{2}\alpha_0 k_1^2\dL^2 \left( \ba_{k_1}\db_{k_2}+\da_{k_1}\bb_{k_2}+\bb_{k_1}\da_{k_2}+\db_{k_1}\ba_{k_2} \right) 
- \frac{1}{2} {k_1}^{2}{\dL}^{2} \left( \ba_{k_1}\bb_{k_2}+\bb_{k_1}\ba_{k_2} \right) \right|
\end{array}
}
\cocc{k}{4}{ 
\begin{array}{llll}
\frac{1}{2} k^{2}{\dL}^{2} \left( |\db_k|+ |\da_k| \right) 
\\
+ {\displaystyle \sum _{k_1+k_2=k}} 
\left| 3 k_1\dL \left( \da_{k_1}\da_{k_2}-\db_{k_1}\db_{k_2} \right) 
+ \frac{1}{2}\alpha_0 k_1^2 \dL^2 \left( \da_{k_1}\db_{k_2}+\db_{k_1}\da_{k_2} \right) + \frac{1}{2} k_1^2 \dL^2 \left( \ba_{k_1}\db_{k_2}+\da_{k_1}\bb_{k_2}+\bb_{k_1}\da_{k_2}+\db_{k_1}\ba_{k_2} \right) \right|
\\
+ {\displaystyle \sum _{k_1+k_2=k}} 
\left| 3 k_1\dL \left( \da_{k_1}\db_{k_2}+\db_{k_1}\da_{k_2} \right) 
- \frac{1}{2}\alpha_0  k_1^2 \dL^2 \left( \da_{k_1}\da_{k_2}-\db_{k_1}\db_{k_2} \right) - \frac{1}{2} k_1^2 \dL^2 \left( \ba_{k_1}\da_{k_2}+\da_{k_1}\ba_{k_2}-\bb_{k_1}\db_{k_2}-\db_{k_1}\bb_{k_2} \right) \right|
\end{array}
}
\cocc{k}{5}{ 
{\displaystyle \sum _{k_1+k_2=k}} \frac{1}{2} {k_1}^{2}{\dL}^{2} \left[ |  \da_{k_1}\db_{k_2}+\db_{k_1}\da_{k_2} | + | \da_{k_1}\da_{k_2}-\db_{k_1}\db_{k_2} | \right]
} 
\end{tabular}}
\caption{The bounds $D_{k}^{(j)}$.}
\label{t:bounds_Dk}
\end{table}
Letting
\[
Y_{_F}^{(j)} \bydef \left\{ 
\begin{array}{ll} 
| J_{_F}  \rd_{_F}^{(j)} | ~~, ~~j=0,1 \\
 \left| J_{_F} \right|  D_{_F}^{(j)}~,~~j=2,3,4,5
 \end{array} \right.
\]
we can finally set
\begin{equation} \label{eq:Y_F}
Y_{_F}(\dal) \bydef \sum_{j = 0}^5 Y_{_F}^{(j)} \dal^j.
\end{equation}
%


\subsection{The analytic bound \boldmath$Z(r,\dal)$\unboldmath} 
\label{sec:Z}

In this section, we construct analytically the bound $Z=Z(r,\dal)$. Recall from (\ref{eq:Z_k}) that this bound satisfies the component-wise inequalities
\[
\sup_{b,c \in B(r)} \Bigl| [D_xT(x_\alpha+b,\alpha)c]_k \Bigr| = \sup_{u,v \in B(1)} \Bigl| [D_xT(x_\alpha+ru,\alpha)rv]_k\Bigr|\lec Z_k(r,\dal) .
\]
As mentioned previously in Section~\ref{sec:validated_continuation}, we are going to construct each component $Z_{k,i}(r,\dal)$ ($i=1,2$, $k \ge 0$) of $Z(r,\dal)$ as a polynomial in the variables $r$ and $\dal$. In spirit, the construction of the polynomial expansion of $Z(r,\dal)$ is similar to the construction of the polynomial expansion of $Y(\dal)$ of Section~\ref{sec:Y}. We begin by splitting the expression $D_xT(x_\alpha+ru,\alpha)rv$ in two terms. The first term is small and does not require any further analysis. The second term, on the other hand, requires more analysis. It is expanded as an analytic polynomial using the software {\em Maple} and then bounded using analytic estimates. Let us now be more explicit.

Introducing an almost inverse of the operator $A$ defined in~(\ref{eq:A})
$$
\dagA \bydef \left[\begin{array}{cccccc}
D_{x}f^{(2m-1)}(\bx,\alpha_0) &\,& \zft & \zft & \zft & \cdots \\[2mm]
\zf &\,& \Lambda_{2m-1} & 0 &0 &\cdots \\
\zf &\,& 0 & \Lambda_{2m} &0 & \cdots \\
\zf &\,& 0 & 0& \Lambda_{2m+1} &  \\
\vdots         &\,& \vdots & \vdots & & \ddots  
        \end{array}\right] \, 
$$
we can split $Df(x_\alpha+ru,\alpha)rv$ into two pieces
$$
D_xf(x_\alpha+ru,\alpha)rv = \dagA rv + \bigl[ D_xf(x_\alpha+ru,\alpha)rv - \dagA rv \bigr] .
$$
Hence, we get
\begin{equation}\label{e:splitT}
D_xT(x_\alpha+ru,\alpha)rv = \left( [I - A \dagA] v \right) r - A \, [ D_xf(x_\alpha+ru,\alpha)- \dagA \bigr] v r.
\end{equation}
Note that the infinite dimensional vector $[I - A \dagA] v$ has only finitely many nonzero entries and its finite non trivial part, given by $\left[ I_{_F}-J_{_F}  D_xf^{(2m-1)}(\bx,\alpha_0) \right] v_{_F} \in \mathbb{R}^{2(2m-1)}$, has a small magnitude. This is due to the fact that $J_{_F}$ is a numerical approximation of the inverse of $D_xf^{(2m-1)}(\bx_{_F},\alpha_0)$. In order to bound the second term of (\ref{e:splitT}), further analysis is required. The idea is the following. First, expand each component of the term $[D_xf(x_\alpha+ru,\alpha)- \dagA]vr$ as a finite polynomial of the form
\[
\left( [ D_xf(x_\alpha+ru,\alpha)- \dagA \bigr] vr \right)_{k.i} = \sum_{l_1,l_2} c_{k,i}^{(l_1,l_2)} r^{l_1} \dal^{l_2}.
\]
Second, compute analytic upper bounds $C_k^{(l_1,l_2)}$ so that $|c_{k,i}^{(l_1,l_2)}| \le C_k^{(l_1,l_2)}$ (uniform with respect to $i=1,2$). Finally, use the $C_k^{(l_1,l_2)}$ to define the polynomial bound $Z(r,\dal)$. 

The computation of the $c_{k,i}^{(l_1,l_2)}$ is done analytically using the {\em Maple} program {\em C.mw} which can be found at \cite{code}. The first part of the program computes an analytic representation of $\left( D_xf(x_\alpha+ru,\alpha) vr \right)_{k,i}$. Then, ignoring the fact that  the $\sin(\cdot)$ and the $\cos(\cdot)$ terms (coming from differentiating (\ref{eq:fk1}) and (\ref{eq:fk2})) depend also on $r$ and $\dal$, it computes analytically, for all $k \ge 0$ and $i \in \{1,2\}$ the polynomial expansion
\begin{equation} \label{eq:coeff_expansion}
\left( [ D_xf(x_\alpha+ru,\alpha)- \dagA \bigr] vr \right)_{k.i} = \sum_{l_1=1}^3 \sum_{l_2=0}^{4-l_1} c_{k,i}^{(l_1,l_2)} r^{l_1} \dal^{l_2}.
\end{equation}
Note that the coefficients $c_{k,i}^{(l_1,l_2)}$ of (\ref{eq:coeff_expansion}) are still depending on the $\sin(\cdot)$ and the $\cos(\cdot)$, which themselves depend on $r$ and $\dal$. The last part of {\em C.mw} is dedicated to the computation of the bounds $C_k^{(l_1,l_2)} \ge 0$ such that $ \left| c_{k,i}^{(l_1,l_2)} \right| \le C_{k}^{(l_1,l_2)}$, for $i=1,2$. This part of the program uses several times the triangle inequality and the fact that $|\sin|,|\cos| \le 1$. The bounds $C_{k}^{(l_1,l_2)}$ are presented in Table~\ref{t:boundsC}. 
%
\begin{table}
\newcommand{\cocc}[4]{$C_{#1}^{(#2,#3)}$ & $#4$ \\[1mm] \hline}
\renewcommand{\arraystretch}{1.7}
\centerline{\tiny \begin{tabular}{|r@{\hspace{2mm}}|@{\hspace{2mm}}l|}
\hline
\multicolumn{2}{|c|}{$k=0$, $i=1$} \\ \hline
\cocc{0}{1}{0}{
2 \displaystyle{\sum_{k=2m-1}^\infty} \frac{1}{\omega_k}
}
\multicolumn{2}{|c|}{$k\in \{0,\dots,2m-2\}$.} \\ \hline
\cocc{k}{1}{0}{ 
4 {  \alpha_0} \displaystyle{\sum _{{  k_1}=2 m-1}^{k+m-1}} {\frac { \left| {  \ba}_{{k-{  k_1}}} \right| + \left| {  \bb}_{{k-{  k_1}}} \right| }{\omega_{{{  k_1}}}}}
}
\cocc{k}{1}{1}{ 
\begin{array}{lllll}
k \left| {  \alpha_0} {  \db}_{{k}}+{  \bb}_{{k}} \right| +k \left| {  \alpha_0} {  \da}_{{k}}+{  \ba}_{{k}} \right| +\frac{2}{\omega_k}+{  \alpha_0} {k}^{2} \left| {  \dL} \right| \left(  \left| {  \ba}_{{k}} \right| + \left| {  \bb}_{{k}} \right|  \right) +2 {\frac {{  \alpha_0} k \left| {  \dL} \right| }{\omega_{{k}}}}+k \left( \left| \da_k \right|+  \left| {  \db}_{{k}} \right| \right) +{\frac {k \left| {  \dL} \right| }{\omega_{{k}}}} 
\\
+\displaystyle{\sum _{k_1=-m+1+k}^{m-1}} \left| {  k_1} \right|  \left| {  \ba}_{{{  k_1}}}{  \bb}_{{{  k-k_1}}}+{  \bb}_{{{  k_1}}}{  \ba}_{{{  k-k_1}}}+{  \alpha_0}  \left( {  \ba}_{{{  k_1}}}{  \db}_{{{  k-k_1}}}+{  \da}_{{{  k_1}}}{  \bb}_{{{  k-k_1}}}+{  \bb}_{{{  k_1}}}{  \da}_{{{  k-k_1}}}+{  \db}_{{{  k_1}}}{  \ba}_{{{  k-k_1}}} \right)  \right| 
\\
 +\displaystyle{\sum _{k_1=-m+1+k}^{m-1}} \left| {  k_1} \right|  \left| -{  \ba}_{{{  k_1}}}{  \ba}_{{{  k-k_1}}}+{  \bb}_{{{  k_1}}}{  \bb}_{{{  k-k_1}}}-{  \alpha_0}  \left( {  \ba}_{{{  k_1}}}{  \da}_{{{  k-k_1}}}+{  \da}_{{{  k_1}}}{  \ba}_{{{  k-k_1}}}-{  \bb}_{{{  k_1}}}{  \db}_{{{  k-k_1}}}-{  \db}_{{{  k_1}}}{  \bb}_{{{  k-k_1}}} \right)  \right| 
 \\
+\displaystyle{\sum _{k_1=-m+1}^{m-1}} 4 {\frac { \left| {  \ba}_{{{  k_1}}}+{  \alpha_0} {  \da}_{{{  k_1}}} \right| + \left| {  \bb}_{{{  k_1}}}+{  \alpha_0} {  \db}_{{{  k_1}}} \right| }{\omega_{{{  k-k_1}}}}} 
+\displaystyle{\sum _{k_1=-m+1+k}^{m-1}}{  \alpha_0} {{  k_1}}^{2} \left| {  \dL} \right|  \left(  \left| -{  \ba}_{{{  k_1}}}{  \ba}_{{{  k-k_1}}}+{  \bb}_{{{  k_1}}}{  \bb}_{{{  k-k_1}}} \right| + \left| {  \ba}_{{{  k_1}}}{  \bb}_{{{  k-k_1}}}+{  \bb}_{{{  k_1}}}{  \ba}_{{{  k-k_1}}} \right|  \right) 
\\
+\displaystyle{\sum _{{  k_1}=-m+1+k}^{m-1}} {\frac {{ 2 \alpha_0} 
 \left| {  \dL} \right|  \left(  \left| {  k_1} \right| + \left| k-{  k_1} \right|  \right)  \left(  \left| {  \ba}_{{{  k_1}}} \right| + \left| {  \bb}_{{{  k_1}}} \right|  \right) }{\omega_{{k-{  k_1}}}}}
\end{array}
}
\cocc{k}{1}{2}{ 
\begin{array}{ll}
k \left(  \left| {  \db}_{{k}} \right| + \left| {  \da}_{{k}} \right|  \right) 
+\displaystyle{\sum _{k_1=-m+1+k}^{m-1}} \left| {  k_1} \right|  \left| {  \ba}_{{{  k_1}}}{  \db}_{{{  k-k_1}}}+{  \da}_{{{  k_1}}}{  \bb}_{{{  k-k_1}}}+{  \bb}_{{{  k_1}}}{  \da}_{{{  k-k_1}}}+{  \db}_{{{  k_1}}}{  \ba}_{{{  k-k_1}}}+{  \alpha_0}  \left( {  \da}_{{{  k_1}}}{  \db}_{{{  k-k_1}}}+{  \db}_{{{  k_1}}}{  \da}_{{{  k-k_1}}} \right)  \right| 
\\
+\displaystyle{\sum _{k_1=-m+1+k}^{m-1}} \left| {  k_1} \right|  \left| -{  \ba}_{{{  k_1}}}{  \da}_{{{  k-k_1}}}-{  \da}_{{{  k_1}}}{  \ba}_{{{  k-k_1}}}+{  \bb}_{{{  k_1}}}{  \db}_{{{  k-k_1}}}+{  \db}_{{{  k_1}}}{  \bb}_{{{  k-k_1}}}-{  \alpha_0}  \left( {  \da}_{{{  k_1}}}{  \da}_{{{  k-k_1}}}-{  \db}_{{{  k_1}}}{  \db}_{{{  k-k_1}}} \right)  \right| 
+\displaystyle{\sum _{k_1=-m+1}^{m-1} } 4 {\frac { \left| {  \da}_{{{  k_1}}} \right| + \left| {  \db}_{{{  k_1}}} \right| }{\omega_{{{ k- k_1}}}} }
\end{array}
}
\cocc{k}{1}{3}{ 
\displaystyle{\sum _{k_1=-m+1+k}^{m-1}} \left| {  k_1} \right|  \left(  \left| -{  \da}_{{{  k_1}}}{  \da}_{{{  k-k_1}}}+{  \db}_{{{  k_1}}}{  \db}_{{{  k-k_1}}} \right| + \left| {  \da}_{{{  k_1}}}{  \db}_{{{  k-k_1}}}+{  \db}_{{{  k_1}}}{  \da}_{{{  k-k_1}}} \right|  \right) 
}
\cocc{k}{2}{0}{
\begin{array}{ll}
{\frac {{ 4 \alpha_0} k}{\omega_{{k}}}}+{  \alpha_0} {k}^{2} \left(  \left| {  \ba}_{{k}} \right| + \left| {  \bb}_{{k}} \right|  \right) +2 {\frac {k}{\omega_{{k}}}}
+ \displaystyle{\sum _{k_1+k_2=k}} {\frac {{ 8 \alpha_0}}{\omega_{{{  k_1}}}\omega_{{{  k_2}}}}}
+\displaystyle{\sum _{{  k_1}=-m+1}^{m-1}} {\frac {{ 2 \alpha_0}  \left(  \left| {  k_1} \right| + \left| k-{  k_1} \right|  \right)  \left(  \left| {  \ba}_{{{  k_1}}} \right| + \left| {  \bb}_{{{  k_1}}} \right|  \right) }{\omega_{{k-{  k_1}}}}}
\\
+\displaystyle{\sum _{{  k_1}=-m+1+k}^{m-1}} {\frac {{ 2 \alpha_0}  \left(  \left| {  k_1} \right| + \left| k-{  k_1} \right|  \right)  \left(  \left| {  \ba}_{{{  k_1}}} \right| + \left| {  \bb}_{{{  k_1}}} \right|  \right) }{\omega_{{k-{  k_1}}}}}
+\displaystyle{\sum _{k_1=-m+1+k}^{m-1}}{  \alpha_0} {{  k_1}}^{2} \left(  \left| -{  \ba}_{{{  k_1}}}{  \ba}_{{{  k-k_1}}}+{  \bb}_{{{  k_1}}}{  \bb}_{{{  k-k_1}}} \right| + \left| {  \ba}_{{{  k_1}}}{  \bb}_{{{  k-k_1}}}+{  \bb}_{{{  k_1}}}{  \ba}_{{{  k-k_1}}} \right|  \right) 
\end{array}
} 
\cocc{k}{2}{1}{
2 {\frac {k}{\omega_{{k}}}}+ \displaystyle{\sum _{k_1+k_2=k}} {\frac {8}{\omega_{{{  k_1}}}\omega_{{{  k_2}}}}}
+ \displaystyle{\sum _{k_1=-m+1}^{m-1}} 2  \left( \left| {  k_1} \right| + \left| { k- k_1} \right| \right) \left( {\frac { \left| {  \ba}_{{{  k_1}}}+{  \alpha_0} {  \da}_{{{  k_1}}} \right| + \left| {  \bb}_{{{  k_1}}}+{  \alpha_0} {  \db}_{{{  k_1}}} \right| }{\omega_{{{  k-k_1}}}}} \right) 
}
\cocc{k}{2}{2}{ 
\displaystyle{\sum _{k_1=m-1}^{m-1}} 2 \left( \left| {  k_1} \right| + \left| {  k-k_1} \right| \right) \left( {\frac { \left| {  \da}_{{{  k_1}}} \right| + \left| {  \db}_{{{  k_1}}} \right| }{\omega_{{{  k-k_1}}}}} \right) 
}
\cocc{k}{3}{0}{ 
\displaystyle{\sum _{k_1+k_2=k}} {\frac {{ 4 \alpha_0}  \left| {  k_1} \right| }{\omega_{{{  k_1}}}\omega_{{{  k_2}}}}}
}
\cocc{k}{3}{1}{ 
\displaystyle{\sum _{k_1+k_2=k} {\frac { 4 \left| {  k_1} \right| }{\omega_{{{  k_1}}}\omega_{{{  k_2}}}}}}
}
\end{tabular}}
\caption{The bounds $C_{k,i}^{(l_1,l_2)}$ for $k=0,\dots,M-1$.}
\label{t:boundsC}
\end{table}
%
Note that the cases $C^{(1,0)}_{0,1}$ and $C^{(1,0)}_{0,2}$ are treated differently. Indeed, the upper bound $|c^{(1,0)}_{0,1}| \le C^{(1,0)}_{0,1}$ is given in the first line of Table~\ref{t:boundsC} and for the upper bound $|c^{(1,0)}_{0,2}| \le C^{(1,0)}_{0,2}$, we use the bound $C^{(1,0)}_k$ (letting $k=0$, this bounds is actually $0$) on the second line of Table~\ref{t:boundsC}. Now that we have the bounds $C_k^{(l_1,l_2)}$, we are ready to compute the bounds $Z_k(r,\dal)$. 

\subsubsection{The analytic bounds \boldmath$Z_k(r,\dal)$, $k \in \{0,\dots, M-1\}$ \unboldmath}

As mentioned earlier, the {\em Maple} program {\em C.mw} generates the coefficients $C_k^{(l_1,l_2)}$.  Defining $C_{_F}^{(l_1,l_2)}={\tiny  \left( \hspace{-.2cm} \begin{array}{cc} C_k^{(l_1,l_2)} \\ C_k^{(l_1,l_2)} \end{array}\hspace{-.2cm} \right)}_{k=0,\dots,M-1}$, we get that
\begin{eqnarray*}
&& \hspace{-1.8cm} \left| \left[ D_xT(x_\alpha+ru,\alpha)rv \right]_{_F} \right|  \\
&=&  \left| \left[ I_{_F} - J_{_F}  D_xf^{(2m-1)}(\bx,\alpha_0) \right] v_{_F} r - J_{_F}  \left[ D_xf(x_\alpha+ru,\alpha)rv - \dagA rv\right]_{_F} \right| \\
&\lec& \left| \left[I_{_F} - J_{_F}  D_xf^{(2m-1)}(\bx,\alpha_0)\right] v_{_F} \right| r  + \sum_{l_1=1}^3 \sum_{l_2=0}^{4-l_1} |J_{_F}|  |c_{_F}^{(l_1,l_2)}| r^{l_1} \dal^{l_2} \\
&\lec& \left| \left[I_{_F} - J_{_F}  D_xf^{(2m-1)}(\bx,\alpha_0)\right] v_{_F} \right| r  + \sum_{l_1=1}^3 \sum_{l_2=0}^{4-l_1} |J_{_F}|  C_{_F}^{(l_1,l_2)} r^{l_1} \dal^{l_2}.
\end{eqnarray*}

Before proceeding further, it is important to remark that the coefficients $C_0^{(1,0)}$, $C_k^{(2,0)}$,$C_k^{(2,1)}$, $C_k^{(3,0)}$ and $C_k^{(3,1)}$ of Table~\ref{t:boundsC} involve infinite sums. This means that we have to use analytic estimates to bound these sums. The case of $C_0^{(1,0)}$ is trivial. For instance, consider the estimate 
\begin{equation} \label{eq:upper_boundS3}
\sum_{k=M}^\infty \frac{1}{\omega_k} \le \frac{1}{(s-1)(M-1)^{s-1}}
\end{equation}
The infinite sums involved in $C_k^{(2,0)}$,$C_k^{(2,1)}$, $C_k^{(3,0)}$ and $C_k^{(3,1)}$ can be bounded using the following result.

\begin{lemma} \label{lem:estimates_sum_finite_case}
Let $k \in \{0,\dots,M-1\}$, recall the definition of the weights $\omega_k$ in (\ref{e:weights}) and define
\begin{equation} \label{eq:phi_k}
\phi_k = \sum_{k_1=1}^{k-1} \frac{1}{k_1^s(k-k_1)^s}.
\end{equation}
Then
\begin{equation} \label{eq:upper_boundS1}
\sum _{k_1+k_2=k} \frac {1}{\omega_{k_1} \omega_{k_2}} \le \phi_k + \frac{1}{\omega_k} \left( 4 + \frac{2}{s-1} \right) 
\end{equation}
and
\begin{equation} \label{eq:upper_boundS2}
\sum _{k_1+k_2=k} \frac {|k_1|}{\omega_{k_1} \omega_{k_2}} \le \frac{1}{(k+1)^s} \left( 1 + \frac{1}{s-2} \right) +  \frac{k}{2} \phi_k +  \frac{k}{\omega_k} + \frac{1}{(k+1)^{s-1}} \left( 1 + \frac{1}{s-1} \right).
\end{equation}
\end{lemma}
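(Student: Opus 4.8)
The plan is to evaluate the convolution sum $\sum_{k_1+k_2=k}$, which runs over all $k_1\in\mathbb{Z}$ with $k_2=k-k_1$, by partitioning the index $k_1$ into five regimes: the two \emph{boundary} indices $k_1=0$ and $k_1=k$, the \emph{interior} block $1\le k_1\le k-1$, and the two \emph{tails} $k_1\le -1$ and $k_1\ge k+1$. First I would dispatch the boundary contributions: in (\ref{eq:upper_boundS1}) each of $k_1=0$ and $k_1=k$ gives $1/(\omega_0\omega_k)=1/\omega_k$, while in (\ref{eq:upper_boundS2}) the $k_1=0$ term vanishes and $k_1=k$ gives $k/\omega_k$. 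The interior block is exactly $\phi_k$ for (\ref{eq:upper_boundS1}) by the definition (\ref{eq:phi_k}). For (\ref{eq:upper_boundS2}) the interior block is $\sum_{k_1=1}^{k-1} k_1/(k_1^s(k-k_1)^s)$; I would symmetrize under $k_1\leftrightarrow k-k_1$, which shows $2\sum_{k_1=1}^{k-1} k_1/(k_1^s(k-k_1)^s)=\sum_{k_1=1}^{k-1}(k_1+(k-k_1))/(k_1^s(k-k_1)^s)=k\,\phi_k$, so the interior block equals $\tfrac{k}{2}\phi_k$.

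What remains is the two tails. Reindexing $k_1=-j$ (so $k_2=k+j$) and $k_1=k+j$ (so $k_2=-j$) with $j\ge 1$, both tails of (\ref{eq:upper_boundS1}) equal $T\bydef\sum_{j\ge 1}1/(j^s(k+j)^s)$, and the two tails of (\ref{eq:upper_boundS2}) are $S_1\bydef\sum_{j\ge1}1/(j^{s-1}(k+j)^s)$ and $S_2\bydef\sum_{j\ge1}1/((k+j)^{s-1}j^s)$. Each of these I would bound by isolating the $j=1$ summand and, for $j\ge 2$, combining the monotonicity estimate $(k+j)^{-\sigma}\le (k+1)^{-\sigma}$ with the integral comparison $\sum_{j\ge2}j^{-\sigma}\le\int_1^\infty x^{-\sigma}\,dx=1/(\sigma-1)$ (which requires $s>2$ when applied to $S_1$). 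This produces $T\le \frac{1}{\omega_k}\bigl(1+\frac{1}{s-1}\bigr)$, $S_1\le\frac{1}{(k+1)^s}\bigl(1+\frac{1}{s-2}\bigr)$, and $S_2\le\frac{1}{(k+1)^{s-1}}\bigl(1+\frac{1}{s-1}\bigr)$.

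Assembling the pieces, the left-hand side of (\ref{eq:upper_boundS1}) is at most $\frac{2}{\omega_k}+\phi_k+2T\le \phi_k+\frac{1}{\omega_k}\bigl(4+\frac{2}{s-1}\bigr)$, and the left-hand side of (\ref{eq:upper_boundS2}) is at most $\frac{k}{\omega_k}+\frac{k}{2}\phi_k+S_1+S_2$, which is exactly the stated right-hand side. The only point needing a separate remark is the degenerate case $k=0$, where the interior block is empty and the two boundary indices coincide; there one simply keeps the (now slightly wasteful) bound $2/\omega_0$ for the boundary part of (\ref{eq:upper_boundS1}) and $k/\omega_0=0$ for that of (\ref{eq:upper_boundS2}), together with $2T$ for the tails, and checks directly that both inequalities still hold. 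I do not anticipate a real obstacle here: the argument is elementary, and the only delicate points are the careful bookkeeping of the five index regimes, the symmetrization identity that turns the factor $|k_1|$ on the interior block into $k/2$, and keeping the denominators $(k+1)$ rather than $k$ in the tail estimates so that the constants match the statement.
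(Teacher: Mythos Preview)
Your proposal is correct and follows essentially the same approach as the paper: the same five-regime decomposition of $k_1$, the same symmetrization $k_1\leftrightarrow k-k_1$ to turn the interior block into $\tfrac{k}{2}\phi_k$, and the same tail estimates leading to the constants $1+\tfrac{1}{s-1}$ and $1+\tfrac{1}{s-2}$. You are in fact more explicit than the paper on the symmetrization identity, on why the tail bounds hold (the paper just writes the result), and on the degenerate case $k=0$.
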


\begin{proof}
First,
\begin{eqnarray*}
\sum _{k_1+k_2=k} \frac {1}{\omega_{k_1} \omega_{k_2}} &=& \sum_{k_1=-\infty}^{-1} \frac{1}{\omega_{k_1} \omega_{k-k_1}} + \frac{1}{\omega_k} + \sum_{k_1=1}^{k-1} \frac{1}{k_1^s(k-k_1)^s} +  \frac{1}{\omega_k} +  \sum_{k_1=k+1}^{\infty} \frac{1}{\omega_{k_1} \omega_{k-k_1}} 
\\
&=&  \phi_k + \frac{2}{\omega_k} + 2 \sum_{k_1=1}^\infty \frac{1}{k_1^s(k+k_1)^s} 
\\
&\le & \phi_k + \frac{1}{\omega_k} \left( 4 + \frac{2}{s-1} \right).
\end{eqnarray*} 
Second,
\begin{eqnarray*}
\sum _{k_1+k_2=k} \frac {|k_1|}{\omega_{k_1} \omega_{k_2}} &=&  \sum_{k_1=-\infty}^{-1} \frac{|k_1|}{\omega_{k_1} \omega_{k-k_1}} + \sum_{k_1=1}^{k-1} \frac{1}{k_1^{s-1}(k-k_1)^s} +  \frac{k}{\omega_k} +  \sum_{k_1=k+1}^{\infty} \frac{|k_1|}{\omega_{k_1} \omega_{k-k_1}} \\
&=&
\sum_{k_1=1}^{\infty} \frac{1}{k_1^{s-1} (k+k_1)^s} +  \frac{k}{2} \phi_k + \frac{k}{\omega_k} + \sum_{k_1=1}^{\infty} \frac{1}{(k+k_1)^{s-1} k_1^s} \\
&\le& \frac{1}{(k+1)^s} \left( 1 + \frac{1}{s-2} \right) +  \frac{k}{2} \phi_k +  \frac{k}{\omega_k} + \frac{1}{(k+1)^{s-1}} \left( 1 + \frac{1}{s-1} \right).
\end{eqnarray*} 
\end{proof}
Hence, replacing the infinite sums of sums of Table~\ref{t:boundsC} using the upper bounds (\ref{eq:upper_boundS3}), (\ref{eq:upper_boundS1}) and (\ref{eq:upper_boundS2}), we get new upper bounds ${\bf C}_{_F}^{(l_1,l_2)}$. For $k\in \{0,\dots,M-1\}$, we then define the $Z_k(r,\dal) \in \mathbb{R}^2$ to be the 2 dimensional $k^{th}-$ component of 
\begin{equation} \label{eq:Z_F}
Z_{_F}(r,\dal) \bydef \left| \left[I_{_F} - J_{_F}  Df^{(2m-1)}(\bx,\alpha_0)\right] v_{_F} \right| r  + \sum_{l_1=1}^3 \sum_{l_2=0}^{4-l_1} |J_{_F}|  {\bf C}_{_F}^{(l_1,l_2)} r^{l_1} \dal^{l_2}.
\end{equation}

\subsubsection{The analytic bound \boldmath$\hat{Z}_M(r,\dal)$ \unboldmath}

Consider $k \ge M=2m-1$. The goal of this section is to compute upper bounds $\hat{C}^{(l_1,l_2)}>0$ such that for every $k \ge M$ and  $i \in \{1,2\}$, 
\begin{equation} \label{eq:hatC}
\left| c_{k,i}^{(l_1,l_2)} \right| \le \frac{1}{k^{s-1}} \hat{C}^{(l_1,l_2)} 
\end{equation}
where $\hat{C}^{(l_1,l_2)}$ is independent of $k$ and $i$. We computed the $\hat{C}^{(l_1,l_2)}$ using the {\em Maple} program {\em hatC.mw} which can be found at \cite{code} and by using the following result. 

\begin{lemma} \label{lem:estimates_sum_tail_case}
Defining
\[ 
 \gamma \bydef 
  2 \left[\frac{M}{M-1}\right]^s +  
  \left[ \frac{4\ln(M-2)}{M} + \frac{\pi^2-6}{3}  \right]
  \left[ \frac{2}{M} + \frac{1}{2} \right]^{s-2}
\]
and considering $k \ge M$, we have that
\begin{eqnarray} \label{eq:upper_boundS1_tail}
\sum _{k_1+k_2=k} \frac {1}{\omega_{k_1} \omega_{k_2}} &\le& \frac{1}{k^s}  \left( 4 + \frac{2}{s-1} + \gamma \right) \label{eq:upper_boundS1_tail0} \\
&\le& \frac{1}{k^{s-1}} \left[ \frac{1}{M} \left( 4 + \frac{2}{s-1} + \gamma \right) \right] \label{eq:upper_boundS1_tail}
\end{eqnarray}
and
\begin{equation} \label{eq:upper_boundS2_tail}
\sum _{k_1+k_2=k} \frac {|k_1|}{\omega_{k_1} \omega_{k_2}} \le  \frac{1}{k^{s-1}} \left( 3 + \frac{2}{s-1} + \frac{\gamma}{2} \right)  .
\end{equation}
\end{lemma}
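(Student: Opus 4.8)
\emph{Approach and reduction.} The plan is to treat both estimates by the same decomposition of the convolution that was used for Lemma~\ref{lem:estimates_sum_finite_case}, splitting $\sum_{k_1+k_2=k}$ according to the five ranges $k_1\le-1$, $k_1=0$, $1\le k_1\le k-1$, $k_1=k$, $k_1\ge k+1$. For the first estimate, the terms $k_1=0$ and $k_1=k$ each contribute $1/k^s$; the ranges $k_1\le-1$ and $k_1\ge k+1$ together contribute $2\sum_{j\ge1}\tfrac1{j^s(k+j)^s}$, which after the crude bound $(k+j)^{-s}\le k^{-s}$ and the integral comparison $\sum_{j\ge1}j^{-s}\le 1+\tfrac1{s-1}$ is at most $\tfrac1{k^s}\bigl(2+\tfrac2{s-1}\bigr)$; and the central range contributes exactly $\phi_k$ from (\ref{eq:phi_k}). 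So the only genuinely new ingredient beyond Lemma~\ref{lem:estimates_sum_finite_case} is the uniform-in-$k$ bound
\[
\phi_k \le \frac{\gamma}{k^s}, \qquad k\ge M .
\]
Granting it, adding the four contributions gives (\ref{eq:upper_boundS1_tail0}), and (\ref{eq:upper_boundS1_tail}) follows by writing $1/k^s = (1/k)(1/k^{s-1})\le(1/M)(1/k^{s-1})$.

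\emph{Proving $k^s\phi_k\le\gamma$.} I would isolate in $\phi_k$ the two endpoint terms $k_1=1$ and $k_1=k-1$, each equal to $(k-1)^{-s}$, so that $k^s$ times their sum is $2\,(k/(k-1))^s$; since $t\mapsto t/(t-1)$ is decreasing this is at most $2\,(M/(M-1))^s$, the first term of $\gamma$. For the central range $2\le k_1\le k-2$ I would use the partial-fraction identity $\tfrac{k}{k_1(k-k_1)}=\tfrac1{k_1}+\tfrac1{k-k_1}$ to write
\[
\frac{k^s}{k_1^s(k-k_1)^s}
= \Bigl(\frac1{k_1^2}+\frac2{k_1(k-k_1)}+\frac1{(k-k_1)^2}\Bigr)\Bigl(\frac{k}{k_1(k-k_1)}\Bigr)^{s-2}.
\]
On $2\le k_1\le k-2$ the product $k_1(k-k_1)$ is minimized at the endpoints, so $k_1(k-k_1)\ge 2(k-2)$ and hence $\tfrac{k}{k_1(k-k_1)}\le\tfrac{k}{2(k-2)}=\tfrac12+\tfrac1{k-2}\le\tfrac12+\tfrac2M$ for $k\ge M\ge4$; since $s\ge2$ the exponent $s-2$ is nonnegative, so the last factor above is bounded uniformly by $(\tfrac2M+\tfrac12)^{s-2}$. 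Summing the remaining factor over $2\le k_1\le k-2$, the two square terms contribute at most $2(\tfrac{\pi^2}{6}-1)=\tfrac{\pi^2-6}{3}$, and the cross term $\sum_{k_1=2}^{k-2}\tfrac2{k_1(k-k_1)}=\tfrac4k\sum_{k_1=2}^{k-2}\tfrac1{k_1}\le\tfrac{4\ln(k-2)}{k}\le\tfrac{4\ln(M-2)}{M}$, the last step using that $\ln(x-2)/x$ is decreasing for $x\ge M$. Adding the endpoint and central contributions gives $k^s\phi_k\le\gamma$.

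\emph{The weighted estimate.} For (\ref{eq:upper_boundS2_tail}) I would use the same five-range split of $\sum_{k_1+k_2=k}\tfrac{|k_1|}{\omega_{k_1}\omega_{k_2}}$. The term $k_1=0$ vanishes; $k_1=k$ contributes $k/k^s=1/k^{s-1}$; the ranges $k_1\le-1$ and $k_1\ge k+1$ give $\sum_{j\ge1}\tfrac1{j^{s-1}(k+j)^s}$ and $\sum_{j\ge1}\tfrac1{(k+j)^{s-1}j^s}$ respectively, each at most $\tfrac1{k^{s-1}}\sum_{j\ge1}j^{-s}\le\tfrac1{k^{s-1}}\bigl(1+\tfrac1{s-1}\bigr)$ after the bounds $(k+j)^s\ge k^{s-1}j$ and $(k+j)^{s-1}\ge k^{s-1}$; and the central range gives $\sum_{k_1=1}^{k-1}\tfrac1{k_1^{s-1}(k-k_1)^s}=\sum_{k_1=1}^{k-1}\tfrac{k_1}{k_1^s(k-k_1)^s}$, which by the symmetrization $k_1\leftrightarrow k-k_1$ equals $\tfrac k2\phi_k\le\tfrac{\gamma}{2k^{s-1}}$. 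Adding the four pieces yields the constant $3+\tfrac2{s-1}+\tfrac\gamma2$, which is (\ref{eq:upper_boundS2_tail}).

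\emph{Main obstacle.} The crux is the uniform bound $k^s\phi_k\le\gamma$: one must extract a $k$-independent constant while keeping the endpoint factor $(M/(M-1))^s$ and the central factor $(\tfrac2M+\tfrac12)^{s-2}$ separate, and verify the two monotonicity facts that actually constrain $M$ — that $t/(t-1)$ is decreasing (automatic) and that $\ln(x-2)/x$ is decreasing for $x\ge M$, which holds as soon as $\ln(M-2)\ge M/(M-2)$, comfortably satisfied for the mode counts $m$ used in the computer-assisted proof. Everything else is bookkeeping with geometric-type and integral-comparison estimates identical in spirit to those already carried out for Lemma~\ref{lem:estimates_sum_finite_case}.
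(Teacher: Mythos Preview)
Your proposal is correct and follows essentially the same route as the paper: the same five-range decomposition as in Lemma~\ref{lem:estimates_sum_finite_case}, with the new ingredient being the uniform bound $k^s\phi_k\le\gamma$ for $k\ge M$. The paper imports this $\phi_k$ estimate verbatim from Lemma~A.2 in \cite{BL} and then invokes the monotonicity in $k$ of the three pieces $(k/(k-1))^s$, $\ln(k-2)/k$, $(2/k+1/2)^{s-2}$ to replace $k$ by $M$; your endpoint--plus--partial-fraction argument is precisely a self-contained rederivation of that cited lemma, so the two proofs coincide.
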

\begin{proof}
Let $k \ge M$. By Lemma~A.2 in \cite{BL}, we get
\begin{eqnarray*}
\phi_k &=& \sum_{k_1=1}^{k-1} \frac{1}{k_1^s(k-k_1)^s} \\
&\le& 
\frac{1}{k^s} \left( 2 \left[\frac{k}{k-1}\right]^s +  
\left[ \frac{4\ln(k-2)}{k} + \frac{\pi^2-6}{3}  \right]
\left[ \frac{2}{k} + \frac{1}{2} \right]^{s-2} \right) 
\\
& \le & \frac{1}{k^s} \gamma.
\end{eqnarray*}
The rest of the proof is a minor modification of the proof of Lemma~\ref{lem:estimates_sum_finite_case}.
\end{proof}

\begin{table}
\newcommand{\cocc}[3]{$\widehat{C}^{(#1,#2)}$ & $#3$ \\[1mm] \hline}
\renewcommand{\arraystretch}{1.7}

\centerline{\tiny \begin{tabular}{|r@{\hspace{2mm}}|@{\hspace{2mm}}l|}
\hline
\cocc{1}{0}{ 
 \displaystyle{ \sum_{k_1=1}^{m-1}} \frac{4 \alpha_0}{2m-1}
(|\ba_{k_1}|+|\bb_{k_1}|) \left( 1+ \frac{1}{\left(1-\frac{k_1}{2m-1} \right)^s} \right)
}
\cocc{1}{1}{ 
\frac{2}{2m-1} + (2\alpha_0 |1+\ba_0| + 1) |\dL|  + \frac{4 |\ba_0+\alpha_0 \da_0|}{2m-1} +
\displaystyle{ \sum_{k_1=1}^{m-1}} \frac{4}{2m-1} ( |\ba_{k_1} + \alpha_0 \da_{k_1} |+|\bb_{k_1} + \alpha_0 \db_{k_1}  | ) \left( 1 +  \frac{1}{\left(1-\frac{k_1}{2m-1} \right)^s} \right)
}
\cocc{1}{2}{ 
\frac{4|\da_0| }{2m-1} + \displaystyle{ \sum_{k_1=1}^{m-1}} \frac{4}{2m-1} 
(|\da_{k_1}|+|\db_{k_1}|) \left( 1+\frac{1}{\left(1-\frac{k_1}{2m-1} \right)^s} \right)
}
\cocc{1}{3}{ 
0
}
\cocc{2}{0}{ 
\begin{array}{ll}
2 \alpha_0 (1+|\ba_0|+|1+\ba_0|) + \frac{8 \alpha_0}{2m-1} \left(4 + \frac{2}{s-1} + \gamma \right)
+  \displaystyle{ \sum_{k_1=1}^{m-1}} \frac{2 \alpha_0 k_1}{2m-1}
(|\ba_{k_1}|+|\bb_{k_1}|) \left( 1+ \frac{1}{\left(1-\frac{k_1}{2m-1} \right)^s} \right) \\
+  \displaystyle{ \sum_{k_1=1}^{m-1}} 2 \alpha_0
(|\ba_{k_1}|+|\bb_{k_1}|) \left( 1+ \frac{1}{\left(1-\frac{k_1}{2m-1} \right)^{s-1}} \right)
\end{array}
}
\cocc{2}{1}{
\begin{array}{ll}
2(1+|\ba_0+\alpha_0 \da_0|) + \frac{8}{2m-1} \left(4 + \frac{2}{s-1} + \gamma \right)
+  \displaystyle{ \sum_{k_1=1}^{m-1}} \frac{2 k_1}{2m-1}
( |\ba_{k_1} + \alpha_0 \da_{k_1} |+|\bb_{k_1} + \alpha_0 \db_{k_1}  | ) \left( 1+ \frac{1}{\left(1-\frac{k_1}{2m-1} \right)^s} \right)
\\
+  \displaystyle{ \sum_{k_1=1}^{m-1}} 2 
( |\ba_{k_1} + \alpha_0 \da_{k_1} |+|\bb_{k_1} + \alpha_0 \db_{k_1}  | ) \left( 1+ \frac{1}{\left(1-\frac{k_1}{2m-1} \right)^{s-1}} \right)
\end{array}
}
\cocc{2}{2}{ 
2 |\da_0| +  \displaystyle{ \sum_{k_1=1}^{m-1}} \frac{2 k_1}{2m-1}
(|\da_{k_1}|+|\db_{k_1}|) \left( 1+ \frac{1}{\left(1-\frac{k_1}{2m-1} \right)^s} \right)
+  \displaystyle{ \sum_{k_1=1}^{m-1}} 2 
(|\ba_{k_1}|+|\bb_{k_1}|) \left( 1+ \frac{1}{\left(1-\frac{k_1}{2m-1} \right)^{s-1}} \right)
}
\cocc{3}{0}{ 
4 \alpha_0 \left( 3 + \frac{2}{s-1}+\frac{\gamma}{2} \right)
}
\cocc{3}{1}{ 
12 + \frac{8}{s-1} + 2 \gamma
}
\end{tabular}}
\caption{The bounds $\hat{C}^{(l_1,l_2)}$.}
\label{t:bounds_hatC}
\end{table}

The bounds (\ref{eq:upper_boundS1_tail}) and (\ref{eq:upper_boundS2_tail}) are used to find the $\hat{C}^{(l_1,l_2)}$ satisfying (\ref{eq:hatC}). 
The bounds  $\hat{C}^{(l_1,l_2)}$  are presented in Table~\ref{t:bounds_hatC}. We still need one last estimate before defining the bound $\hat{Z}_M(r,\dal)$.

\begin{lemma} \label{lm:Xi_M}
Let $\bL>0$, $\ba_0 \in \mathbb{R}$ and consider $m$ such that (\ref{eq:inv_condition2}) is satisfied. Define 
\[ \rho = \frac{M}{M \bL-\alpha_0|1+\ba_0|} >0 \]
and
\[
\Xi = \left( \begin{array}{cc} \frac{\rho^2}{M} \alpha_0 \left( |\ba_0|+|1+ \ba_0| \right) & \rho \\ \rho & \frac{\rho^2}{M} \alpha_0 \left( |\ba_0|+|1+ \ba_0| \right) \end{array} \right)
\]
Then for all $k \ge M$, $\Lambda_k$ is invertible and 
\begin{equation}
\left| {\Lambda_k}^{-1} \right| \lec \frac{1}{k} \Xi~ .
\end{equation}
\end{lemma}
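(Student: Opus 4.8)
The plan is to compute $\Lambda_k^{-1}$ explicitly from the $2\times 2$ structure exhibited in (\ref{eq:Lambda_k}), and then bound each entry component-wise. Recall from (\ref{eq:Lambda_k}) that for $k \ge M = 2m-1$ one has
\[
\Lambda_k = \left( \begin{array}{cc} \tau_k & \delta_k \\ -\delta_k & \tau_k \end{array} \right),
\qquad
\Lambda_k^{-1} = \frac{1}{\tau_k^2+\delta_k^2}\left( \begin{array}{cc} \tau_k & -\delta_k \\ \delta_k & \tau_k \end{array} \right),
\]
with $\tau_k = \alpha_0\ba_0 + \alpha_0(1+\ba_0)\cos(k\bL)$ and $\delta_k = -k\bL + \alpha_0(1+\ba_0)\sin(k\bL)$. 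The first step is to record that $\Lambda_k$ is invertible for $k\ge M$: this was already done in Section~\ref{sec:fs_fpe}, where condition (\ref{eq:inv_condition2}) guarantees $\delta_k<0$ (hence $\delta_k \ne 0$) for $k\ge M$, so $\det(\Lambda_k)=\tau_k^2+\delta_k^2>0$.

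The heart of the argument is a lower bound on $|\delta_k|$ and, via it, on $\tau_k^2+\delta_k^2$. Since $|\cos|,|\sin|\le 1$, we have $|\tau_k| \le \alpha_0(|\ba_0|+|1+\ba_0|)$ and
\[
|\delta_k| \;\ge\; k\bL - \alpha_0|1+\ba_0| \;\ge\; k\Bigl(\bL - \tfrac{\alpha_0|1+\ba_0|}{M}\Bigr) \;=\; \frac{k}{\rho},
\]
using $k \ge M$ together with the definition $\rho = M/(M\bL - \alpha_0|1+\ba_0|)$; note (\ref{eq:inv_condition2}) is exactly what makes the denominator $M\bL - \alpha_0|1+\ba_0|$ positive, so $\rho>0$. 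Consequently $\tau_k^2 + \delta_k^2 \ge \delta_k^2 \ge k^2/\rho^2$. Now bound each entry of $|\Lambda_k^{-1}|$: the off-diagonal entries are $|\delta_k|/(\tau_k^2+\delta_k^2) \le |\delta_k|/\delta_k^2 = 1/|\delta_k| \le \rho/k$, which gives the $\rho$ in the off-diagonal slots of $\Xi$ after multiplying by $1/k$... wait, more precisely $|\delta_k|/(\tau_k^2+\delta_k^2) \le \rho/k = \frac{1}{k}\cdot\rho$, matching the claimed off-diagonal entry of $\frac1k\Xi$. The diagonal entries are $|\tau_k|/(\tau_k^2+\delta_k^2) \le \alpha_0(|\ba_0|+|1+\ba_0|)/(k^2/\rho^2) = \frac{1}{k}\cdot\frac{\rho^2}{k}\alpha_0(|\ba_0|+|1+\ba_0|) \le \frac{1}{k}\cdot\frac{\rho^2}{M}\alpha_0(|\ba_0|+|1+\ba_0|)$, again since $k\ge M$, which matches the diagonal entry of $\frac1k\Xi$. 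Assembling these four component-wise estimates gives $|\Lambda_k^{-1}| \lec \frac1k\Xi$ for all $k\ge M$.

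The only mildly delicate point—and the one I would be most careful about—is the interplay between the hypothesis (\ref{eq:inv_condition2}) and the definition of $\rho$: one must check that $m$ satisfying (\ref{eq:inv_condition2}) indeed forces $M\bL > \alpha_0|1+\ba_0|$ (so that $\rho$ is well-defined and positive) and simultaneously forces $\delta_k<0$ for every $k\ge M$, not merely $\delta_k\ne 0$; both follow because (\ref{eq:inv_condition2}) reads $2m-1 = M > \alpha_0|1+\ba_0|/\bL$, i.e. $M\bL > \alpha_0|1+\ba_0|$, and for $k\ge M$ then $k\bL \ge M\bL > \alpha_0|1+\ba_0| \ge \alpha_0(1+\ba_0)\sin(k\bL)$. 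Everything else is the routine $2\times2$ inversion and the termwise comparisons above; there is no genuine obstacle.
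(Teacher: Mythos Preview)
Your proof is correct and follows essentially the same route as the paper's own argument: explicit $2\times 2$ inversion of $\Lambda_k$, the lower bound $|\delta_k|\ge k/\rho$ from $k\ge M$ and condition~(\ref{eq:inv_condition2}), and the same termwise estimates for the diagonal and off-diagonal entries. Your added check that (\ref{eq:inv_condition2}) forces $M\bL>\alpha_0|1+\ba_0|$ (hence $\rho>0$) is a welcome clarification that the paper leaves implicit.
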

\begin{proof} The fact that $\Lambda_k$ given by (\ref{eq:Lambda_k}) is invertible for all $k\ge M>m$ follows from the choice of $m$ given by (\ref{eq:inv_condition2}) and we then get that
\begin{eqnarray*}
{\Lambda_k}^{-1}
&=& \frac{1}{\tau_k^2+\delta_k^2} \left( \begin{array}{cc} \tau_k & -\delta_k \\ \delta_k  &  \tau_k \end{array} \right) .
\end{eqnarray*}
Since $k \ge M>\frac{\alpha_0 |1+ \ba_0|}{\bL}$,
\begin{eqnarray*}
|\delta_k|&=&k\bL-\alpha_0(1+\ba_0) \sin{k\bL} \\
&\ge& k\bL-\alpha_0|1+\ba_0| \\
&=& k \left( \bL - \frac{\alpha_0|1+\ba_0| }{k} \right) \\
&\ge& k \left( \bL - \frac{\alpha_0 |1+\ba_0| }{M} \right) = \frac{k}{\rho} >0 .
\end{eqnarray*}
Therefore, \[ \frac{1}{|\delta_k|} \le \frac{\rho}{k} \]
and then
\[  \left| \frac{\delta_k }{\tau_k^2+\delta_k^2} \right| \le  \frac{|\delta_k|}{\delta_k^2}
= \frac{1 }{|\delta_k|} \le \frac{1}{k} \rho . \] 
Finally, since $|\tau_k| \le \alpha_0 \left( |\ba_0|+|1+ \ba_0| \right) $, we get that 
\begin{eqnarray*}
\left| \frac{\tau_k }{\tau_k^2+\delta_k^2} \right| &\le&  \frac{\alpha_0 \left( |\ba_0|+|1+ \ba_0| \right)}{\tau_k^2+\delta_k^2} 
\le  \frac{\alpha_0 \left( |\ba_0|+|1+ \ba_0| \right)}{\delta_k^2} \\
 &\le& \frac{\rho^2 \alpha_0 \left( |\ba_0|+|1+ \ba_0| \right)}{k^2} 
\le \frac{1}{k} \left[ \frac{\rho^2 \alpha_0 \left( |\ba_0|+|1+ \ba_0| \right)}{M} \right].
\end{eqnarray*}
\end{proof}

We are now ready to define $\hat{Z}_M(r,\dal)$ in the fashion of Definition~\ref{def:radpol}. 

\begin{lemma}
Define
\begin{equation} \label{eq:hatZ}
\hat{Z}_M(r,\dal) \bydef \frac{1}{M^s} \left( \frac{\rho^2}{M} \alpha_0 \left( |\ba_0|+|1+ \ba_0| \right) + \rho \right) \left[ \sum_{l_1=1}^3 \sum_{l_2=0}^{4-l_1}\hat{C}^{(l_1,l_2)} r^{l_1} \dal^{l_2} \right] \left( \hspace{-.1cm} \begin{array}{cc} 1 \\ 1 \end{array}\hspace{-.1cm} \right)
\end{equation}
and consider $k\ge M$. Then
\[
\left| \left[ DT(x_\alpha+ru,\alpha)rv \right]_k \right| \lec \hat{Z}_M(r,\dal) \left( \frac{M}{k} \right)^s.
\]
\end{lemma}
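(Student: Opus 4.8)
The plan is to assemble the pieces already in place: the splitting (\ref{e:splitT}), the polynomial expansion (\ref{eq:coeff_expansion}) with its uniform coefficient bounds (\ref{eq:hatC}), and the pointwise estimate for $\Lambda_k^{-1}$ from Lemma~\ref{lm:Xi_M}. The first observation is that, for $k\ge M=2m-1$, the first term in (\ref{e:splitT}) contributes nothing. Indeed, from the block structure of $A$ in (\ref{eq:A}) and of $\dagA$, the product $A\dagA$ equals $J_{_F}\,D_xf^{(2m-1)}(\bx,\alpha_0)$ on the finite block and $\Lambda_k^{-1}\Lambda_k=I_2$ on each tail block $k\ge M$; hence $[I-A\dagA]v$ vanishes in every component $k\ge M$, and for such $k$ one has $[D_xT(x_\alpha+ru,\alpha)rv]_k=-\Lambda_k^{-1}\bigl([D_xf(x_\alpha+ru,\alpha)-\dagA]vr\bigr)_k$.

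Next I would invoke (\ref{eq:coeff_expansion}) componentwise together with (\ref{eq:hatC}): for $i\in\{1,2\}$,
\[
\left|\bigl([D_xf(x_\alpha+ru,\alpha)-\dagA]vr\bigr)_{k,i}\right|
\le \sum_{l_1=1}^{3}\sum_{l_2=0}^{4-l_1}|c_{k,i}^{(l_1,l_2)}|\,r^{l_1}\dal^{l_2}
\le \frac{1}{k^{s-1}}\,P(r,\dal),
\]
where $P(r,\dal)\bydef\sum_{l_1=1}^{3}\sum_{l_2=0}^{4-l_1}\hat C^{(l_1,l_2)}r^{l_1}\dal^{l_2}$ and the $\hat C^{(l_1,l_2)}$ are the $k$- and $i$-independent constants of Table~\ref{t:bounds_hatC} (whose validity rests on Lemma~\ref{lem:estimates_sum_tail_case}). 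In vector form this reads $\bigl|([D_xf(x_\alpha+ru,\alpha)-\dagA]vr)_k\bigr|\lec \frac{1}{k^{s-1}}P(r,\dal)\binom{1}{1}$.

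Finally, apply Lemma~\ref{lm:Xi_M}: for $k\ge M$, $\Lambda_k$ is invertible and $|\Lambda_k^{-1}|\lec\frac1k\,\Xi$. Since every entry on the right is nonnegative, componentwise absolute values pass through the matrix product, giving
\[
\left|[D_xT(x_\alpha+ru,\alpha)rv]_k\right|
\lec \frac1k\,\Xi\cdot\frac{1}{k^{s-1}}P(r,\dal)\binom{1}{1}
= \frac{1}{k^{s}}\,P(r,\dal)\,\Xi\binom{1}{1}.
\]
A one-line computation with the explicit $\Xi$ of Lemma~\ref{lm:Xi_M} shows $\Xi\binom{1}{1}=\bigl(\tfrac{\rho^2}{M}\alpha_0(|\ba_0|+|1+\ba_0|)+\rho\bigr)\binom{1}{1}$, so the bound equals $\tfrac{1}{k^{s}}\bigl(\tfrac{\rho^2}{M}\alpha_0(|\ba_0|+|1+\ba_0|)+\rho\bigr)P(r,\dal)\binom{1}{1}$, which by the definition (\ref{eq:hatZ}) of $\hat Z_M(r,\dal)$ is exactly $\hat Z_M(r,\dal)\,(M/k)^s$. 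The genuinely technical estimates — the tail sums controlling the $\hat C^{(l_1,l_2)}$ and the inverse bound for $\Lambda_k^{-1}$ — are already discharged in Lemmas~\ref{lem:estimates_sum_tail_case} and~\ref{lm:Xi_M}, so the only points requiring care in this lemma are verifying that $[I-A\dagA]v$ truly has no tail contribution and tracking the componentwise inequalities so that the absolute values propagate correctly through $\Lambda_k^{-1}\cdot(\text{coefficient vector})$; neither is an obstacle.
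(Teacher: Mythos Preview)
Your proof is correct and follows essentially the same route as the paper's own argument: you use the splitting (\ref{e:splitT}) to reduce to $-\Lambda_k^{-1}$ acting on the tail components, invoke (\ref{eq:coeff_expansion}) and (\ref{eq:hatC}) for the coefficient bounds, apply Lemma~\ref{lm:Xi_M} for $|\Lambda_k^{-1}|\lec\frac{1}{k}\Xi$, and identify $\Xi\binom{1}{1}$ with the scalar factor appearing in (\ref{eq:hatZ}). If anything, your write-up is slightly more explicit than the paper's in justifying why the $[I-A\dagA]v$ term vanishes for $k\ge M$ and in carrying out the computation of $\Xi\binom{1}{1}$.
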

\begin{proof} Let $k \ge M$. Combining equations (\ref{e:splitT}) and (\ref{eq:coeff_expansion}), and Lemma~\ref{lm:Xi_M}, we get that
\begin{eqnarray*}
\Bigl| \left[ DT(x_\alpha+ru,\alpha)rv \right]_k \Bigr| &=& \left| - \Lambda_k^{-1} \left[ Df(x_\alpha+ru,\alpha)rv - \dagA rv \right]_k \right| \\
&\lec& \sum_{l_1=1}^3 \sum_{l_2=0}^{4-l_1} |\Lambda_k^{-1}|  |c_k^{(l_1,l_2)}| r^{l_1} \dal^{l_2} \\ &\lec & \sum_{l_1=1}^3 \sum_{l_2=0}^{4-l_1} \frac{1}{k} \Xi  \frac{1}{k^{s-1}} \hat{C}^{(l_1,l_2)} \left( \hspace{-.1cm} \begin{array}{cc} 1 \\ 1 \end{array}\hspace{-.1cm} \right) r^{l_1} \dal^{l_2} \\
&=&\hat{Z}_M(r,\dal) \left( \frac{M}{k} \right)^s.
\end{eqnarray*}
\end{proof}

\begin{remark} \label{rem:pk_increasing}
Recalling the definitions of $Y_{_F}$, $Z_{_F}$ and $\hat{Z}_M$, given respectively by (\ref{eq:Y_F}), (\ref{eq:Z_F}) and (\ref{eq:hatZ}), one easily observe that the radii polynomials $p_k(r,\dal)$ from Definition~\ref{def:radpol} are monotone increasing in the variable $\dal \ge 0$.
\end{remark}

\subsection{First part of the proof of Theorem~\ref{thm:SOPS}: Rigorous computation of the branch \boldmath$\cF_0^*$\unboldmath ~using validated continuation} \label{sec:1st_part}

In Sections~\ref{sec:Y} and \ref{sec:Z}, we constructed the bounds $Y$ and $Z$, respectively. The coefficients in Tables~\ref{t:bounds_Dk}, \ref{t:boundsC} and \ref{t:bounds_hatC} provide us an analytical representation of the radii polynomials associated to (\ref{eq:f}). The following Procedure is an algorithm to compute a global continuous branch of solutions of $(\ref{eq:f})$.


\begin{procedure}\label{proc:branch}
To check the hypotheses of Lemma~\ref{lem:radpol} and Propostion~\ref{prop:continuum} on the interval $\alpha \in [\pi/2+\eps,2.3]$,
we proceed as follows.
\begin{enumerate} 
\item \label{i:initialize} 
Consider minimum and maximum step-sizes  $\Delta_{\min} =1 \times 10^{-15}$ and $\Delta_{\max}=2$, respectively. Initiate $s = 3 $, $m=6$, $M=2m-1$, $\alpha_0=\pi/2+\eps$, $r_0=0$, 
$\dal =5 \times 10^{-5} \in [\Delta_{\min},\Delta_{\max}]$, $\dal^0=0$,
and an approximate solution $\hat{x}_{_F}$ of $f^{(m)}(\cdot,\alpha_0)=0$ given in Figure~\ref{fig:x0}. Initiate $B_0=B_{\hat{x}_{_F}}(r_0)$.

\item \label{i:findx}
With a classical Newton iteration, find near $\hat{x}_{_F}$ an approximate solution $\bx_{_F}$ of
$f^{(m)}(x_{_F},\alpha_0)=0$. Calculate an approximate solution $\dx_{_F}$ of 
$D_xf^{(m)}(\bx_{_F},\alpha_0)\dx_{_F} + D_\alpha f^{(m)}(\bx_{_F},\alpha_0)=0$.
Using interval arithmetic, verify that conditions (\ref{eq:inv_condition1}) and (\ref{eq:inv_condition2}) are satisfied (this guarantees that the linear operator $A$ defined in (\ref{eq:A}) is invertible).

\item \label{i:coefficients}
Compute, using interval arithmetic, the coefficients of the radii polynomials
$p_k$, $k=0,\dots,M$ given in Definition~\ref{def:radpol}. This is the computationally most expensive step, since it involves computing all coefficients in Tables~\ref{t:bounds_Dk}, \ref{t:boundsC} and \ref{t:bounds_hatC}, and in particular requires the calculation of many loop terms.

\item  \label{i:continuum}
  Calculate numerically
  $\cI = [r_1^-,r_1^+] \bydef \bigcap_{k=0}^M  \{ r \ge 0 \, |\, p_k(r,0) \le 0 \}$.  Consider
  $B_1^- \bydef B_{\bx_{_F}}(r_1^-)$ and $B_1^+ \bydef B_{\bx_{_F}}(r_1^+)$. Verify that $B_0 \subset B_1^+$ or $B_1^- \subset B_0$.

\item \label{i:calcI}
  Calculate numerically
  $I = [I_-,I_+] \bydef \bigcap_{k=0}^M  \{ r \ge 0 \, |\, p_k(r,\dal) \le 0 \}$.
  \vspace*{-1.5mm}
  \begin{itemize}
    \item
      If $I = \varnothing$ then go to Step~\ref{i:failure}. 
    \item
      If $I \neq \varnothing$ then let $r = \frac{I_-+I_+}{2} $.  
      Compute with interval arithmetic $p_k(r,\dal)$.
      If $p_k(r,\dal) <0$ for all $k=0,\dots,M$ then go to Step~\ref{i:success};
      else go to Step~\ref{i:failure}.
   \end{itemize}

\item\label{i:success} 
  Update $\dal^0 \leftarrow \dal$ and $r_0 \leftarrow r$. 
  If $\frac{10}{9}\dal \leq \Delta_{\max}$ then
  update $\dal \leftarrow \frac{10}{9} \dal$ and go to
  Step~\ref{i:calcI}; else go to Step~\ref{i:finalize}.

\item\label{i:failure} 
  If $\dal^0 >0$ then go to Step~\ref{i:finalize};
  else if $\frac{9}{10}\dal \geq \Delta_{\min}$ then
  update $\dal \leftarrow \frac{9}{10} \dal$ and go to Step~\ref{i:calcI}; 
  else go to Step~\ref{i:failedstep}.

\item\label{i:finalize}
  The continuation step has succeeded.
  Store, for future reference, $\bx_{_F}$, $\dx_{_F}$, $r_0$, $\alpha_0$ and $\dal^0$.
  Determine $\alpha_1$ approximately equal to, but interval arithmetically less than, $\alpha_0 + \dal^0$. 
  Make the updates $\alpha_0 \leftarrow \alpha_1$, $\dal \leftarrow \dal^0$,
  $\hat{x}_{_F} \leftarrow \bx_{_F} + \dal^0 \dx_{_F}$ and $\dal^0 \leftarrow 0$.
  If one of the last two components of $\hat{x}_{_F}$ has magnitude larger than $1 \times 10^{-9}$, update  
      $\hat{x}_{_F} \leftarrow (\hat{x}_{_F},0,0)$, $m \leftarrow m+1$ and $M \leftarrow 2m-1$. Update $B_0 \leftarrow B_{\hat{x}_{_F}}(r_0)$ and go to Step~\ref{i:findx} for the next continuation step.

\item\label{i:failedstep}
  The continuation step has failed.
  Either decrease~$\Delta_{\min}$ and return to Step~\ref{i:failure}; 
  or increase $M$ and return to Step~\ref{i:coefficients};
  or increase $m$ and return to Step~\ref{i:findx}.
  Alternatively, terminate the procedure \emph{unsuccessfully} at $\alpha=\alpha_0$
  (although with success on $[\pi/2+\eps,\alpha_0]$).

\end{enumerate}
\end{procedure}


\begin{figure}[ht!]
\footnotesize\centering
\begin{tabular}{|c|c|c|c|c|c|c|}
\hline
$\bL$ & $1.570599180042083$  \\
\hline
$\ba_0$  & 0  \\
\hline
$\ba_1$ & $0.000393777377493$  \\
\hline
$\bb_1$ & $0.031377227341359$  \\
\hline
$\ba_2$ & $-0.000389051487791$  \\
\hline
$\bb_2$ & $0.000206800585095$  \\
\hline
$\ba_3$ & $-0.000004694294098$  \\
\hline
$\bb_3$ & $-0.000001372932742$  \\
\hline
$\ba_4$ & $-0.000000031481138$  \\
\hline
$\bb_4$ & $-0.000000035052666$  \\
\hline
$\ba_5$ & $-0.000000000114467$  \\
\hline
$\bb_5$ & $-0.000000000397361$  \\
\hline
\end{tabular}
\caption{Approximate zero $\hat{x}_{_F}$ at the parameter value $\alpha_0=\frac{\pi}{2}+\eps$.}
\label{fig:x0}
\end{figure}

The {\it Matlab} program {\em intvalWrightCont.m}, which can be found at \cite{code}, performs Procedure~\ref{proc:branch} successfully on the parameter interval $[\pi/2+\eps,2.3]$. Hence, by construction, we get the existence of a continuous one dimensional branch of periodic solutions $\cF_0^*$ which does not have any fold in the range of parameter $[\pi/2+\eps,2.3]$. This result follows from the uniform contraction principle and Proposition~\ref{prop:continuum}. The last step of the proof is to show that $\cF_0^*$ is the branch of SOPS of Wright's equation that bifurcates from the trivial solution at $\alpha=\pi/2$.

\subsection{Second part of the proof of Theorem~\ref{thm:SOPS}: Bifurcation analysis at \boldmath$\alpha=\pi/2$\unboldmath ~ to show that \boldmath$\cF^*_0 \subset \cF_0$\unboldmath} \label{sec:2nd_part}

In this section, we show that the branch $\cF_0^*$ comes from the Hopf bifurcation at $\alpha=\pi/2$. For a detailed analysis of this Hopf bifurcation, we refer to Section~11.4 of \cite{Hale-Lunel}.  Consider the change of variable $y(t)=\beta z(t)$. Plugging $y(t)=\beta z(t)$ in Wright's equation (\ref{eq:wright}), we get 
\begin{equation} \label{eq:new_wright}
\dot{z}(t)=-\alpha z(t-1) [1+\beta z(t)].
\end{equation}
Consider the problem of looking for periodic solutions of (\ref{eq:new_wright}), with the parameter now being $\beta \ge 0$ ($\alpha$ is now considered as a variable). We impose to the periodic solutions the conditions $z(0)=0$ and $\dot{z}(0)=-1$. More precisely, we consider the problem
\begin{equation} \label{eq:new_wright1}
\left\{ 
\begin{array}{llll}
\dot{z}(t)=-\alpha z(t-1) [1+\beta z(t)], ~\beta \ge 0, \\
z\left( t+\frac{2 \pi}{L} \right)=z(t), \\
z(0)=0, ~~ \dot{z}(0)=-1.
\end{array}
\right.
\end{equation}
When $\beta=0$, $\alpha=\pi/2$ and $L=\pi/2$, equation (\ref{eq:new_wright1}) has solution $z(t)=-\frac{2}{\pi} \sin \left( \frac{\pi}{2}t \right)$. This solution corresponds to the Hopf bifurcation point $y(t)= 0  \left( -\frac{2}{\pi} \sin \left( \frac{\pi}{2}t \right) \right) =0$, when $\alpha=\pi/2$ and $L=\pi/2$. The idea is to use validated continuation (in the parameter $\beta \ge 0$) on problem (\ref{eq:new_wright1}) and to connect the rigorously computed branch of SOPS of (\ref{eq:new_wright1}) to the left point of $\cF_0^*$. It is important to note that this new validated continuation cannot help ruling out the existence of fold in the space $(\alpha,y)$, but only in the space $(\beta,z)$. 

Considering the periodic solution $z(t)$ in Fourier expansion, we do as in Section~\ref{sec:f_set_up} and consider a function to solve for.  Defining $X=(\alpha,x)$, it can be shown that an equivalent problem of (\ref{eq:new_wright1}) is $F(X,\beta)=0$, where 
\begin{equation} \label{eq:F}
F_k(X,\beta)= \left\{
\begin{array}{ccc} 
-1+2L \sum_{k=1}^\infty k b_k ,~k=-1 \\
\begin{array}{cc} a_0+2 \sum_{k=1}^{\infty} a_k \\ \alpha \left( a_0 + \beta a_0^2 + 2 \beta \sum_{k_1=1}^{\infty} (\cos{k_1L})\left( a_{k_1}^2+b_{k_1}^2 \right) \right) \end{array}  ,~k=0 \\
R_k(L,\alpha) \left( \begin{array}{cc} a_k \\ b_k\end{array} \right) + \alpha \beta \displaystyle{\sum_{\stackrel{k_1+k_2=k}{k_i \in \mathbb{Z}}}} \Theta_{k_1}(L) \left(
\begin{array}{cc} a_{k_1}a_{k_2}-b_{k_1}b_{k_2} \\ a_{k_1}
b_{k_2}+b_{k_1}a_{k_2} \end{array} \right)  ,~k \ge1,
\end{array} \right. 
\end{equation}
where
\[
R_k(L,\alpha) \bydef  \left( \begin{array}{cc} \alpha \cos{kL} & -kL+\alpha \sin{kL} \\ kL-\alpha \sin{kL}
& \alpha \cos{kL} \end{array} \right)
\]
and 
\[ \Theta_{k_1}(L) \bydef \left( \begin{array}{cc} \cos{k_1 L} & \sin{ k_1
L} \\ -\sin{k_1 L} & \cos{k_1 L} \end{array} \right). \]
To apply validated continuation on problem (\ref{eq:F}), with $\beta \ge 0$ being the parameter, we need to construct the radii polynomials. Here, we do not provide analytically the coefficients of the radii polynomials associated to (\ref{eq:F}), since they are similar to the ones associated to (\ref{eq:f}). A procedure similar to Procedure~\ref{proc:branch} is applied on $(\ref{eq:F})$ to get the existence of a continuous branch of SOPS of (\ref{eq:new_wright1}) on the parameter range $\beta \in [0,\beta_0]$, where $\beta_0 \bydef 0.099847913753516$. We denote this branch by $\mathcal{G}_0^*$. See Figure~\ref{fig:bif_image} for a geometric representation of $\mathcal{G}_0^*$. At the right most point of $\mathcal{G}_0^*$, we have a set $B_0^*$ containing a unique solution of $F(X,\beta_0)=0$. Using a similar argument than the one presented in Proposition~\ref{prop:continuum}, we can show, via the change of coordinates $y= \beta z$, that the solution in the set $B_0^*$ and the solution on the left most part of the branch $\cF_0^*$ are the same. Hence, we proved that $\cF_0^* \subset \cF_0$. \qed

\begin{figure} 
\centerline{\includegraphics[width=6cm]{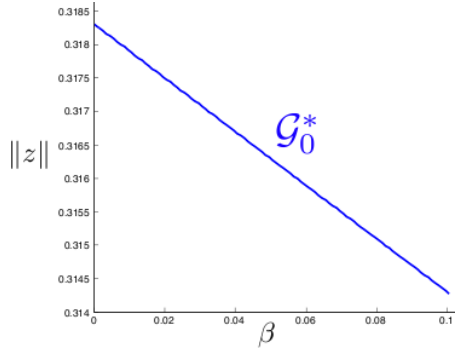}} 
\caption{A branch of SOPS of (\ref{eq:new_wright1}) on $[0,\beta_0]$.}
\label{fig:bif_image}
\end{figure}

\section{Future Work and Acknowledgments} \label{sec:conclusion}

As mentioned in Section~\ref{sec:introduction}, we believe that Theorem~\ref{thm:SOPS} could be improved significantly. The reason why the proof was stopped at $\alpha=2.3$ is due to the fact that the  {\it Matlab} program {\em intvalWrightCont.m} \cite{code} becomes slow for large $\alpha$. Indeed, the evaluation of the coefficients of the radii polynomials is computationally expensive, mainly because of all the iterative loop evaluations in Step~\ref{i:coefficients} of Procedure~\ref{proc:branch}, a task that the interval arithmetic {\em Intval} is not efficient at doing. Using a different programming language (like $C$ or $C++$) would decrease significantly the computational time. We believe that we could push the parameter value up to $\alpha=3$ using a $C$ program. This speculation is based on simulations that were done in {\em Matlab} without interval arithmetic. We could, with the new program, reduce also the value of $\eps$ significantly. 

It worths mentioning that validated continuation can be applied to other delay equations. In particular, one interesting future project would be to apply the method to study periodic solutions of the Mackey-Glass equation (see \cite{mackey-glass})
\begin{equation} \label{eq:mg}
\dot{x}(t)=\frac{\alpha x(t-\tau)}{1+[x(t-\tau)]^n} -\beta x(t), ~~~~\alpha, \beta, \tau>0, n \in \mathbb{N},
\end{equation}
for which the existence of more than one SOPS in (\ref{eq:mg}) is an open conjecture, for certain range of parameters. We refer to \cite{liz-rost} for more details on this conjecture.

The author would like to thank to Roger Nussbaum, John Mallet-Paret, Konstantin Mischaikow and Eduardo Liz for helpful discussions. Also, the author would like to give a special thank to Jan Bouwe van den Berg for his idea about the formulation of the bifurcation analysis presented in Section~\ref{sec:2nd_part}.

\end{document}